\newtheorem{theorem}{\large \it Theorem}
\newtheorem{lemma}[theorem]{\it Lemma}
\newtheorem{definition}{Definition}
\newcommand{\N}{\mathbb{N}}
\newcommand{\R}{\mathbb{R}}
\newcommand{\Z}{\mathbb{Z}}
\newcommand{\W}{\mathcal{W}}
\newcommand{\bm}{\begin{bmatrix}}
\newcommand{\ex}{\end{bmatrix}}
\newcommand{\dbar}{\parallel}
\newcommand{\dcsd}{\textrm{dcsd}}
\newcommand{\ds}{\displaystyle}
\renewcommand{\phi}{\varphi}
\begin{document}
\title{Off-Lattice Random Walks with Excluded Volume: A New Method of Generation, Proof of Ergodicity and Numerical Results}
\author{Laura Plunkett (nee Zirbel)\\
  Department of Mathematics and Science,\\
  Holy Names University,\\
  3500 Mountain Blvd. \\
  Oakland, CA 94619, USA\\
  Email: \texttt{plunkett@hnu.edu}\\
  Homepage: \texttt{https://www.hnu.edu/faculty-staff/plunkett-laura}
  \and
  Kyle Chapman\\
  Mathematics Department\\ 
  University of California: Santa Barbara,\\
  South Hall \\
  Santa Barbara, CA 93106, USA\\
  Email: \texttt{klchapman@math.ucsb.edu}\\
  Homepage: \texttt{http://www.math.ucsb.edu/\textasciitilde klchapman}
  }
  \maketitle

\begin{abstract}
We describe a new algorithm, the reflection method, to generate off-lattice random walks of specified, though arbitrarily large, thickness in $\mathbb{R}^3$ and prove that our method is ergodic on the space of thick walks. The data resulting from our implementation of this method is consistent with the scaling of the squared radius of gyration of random walks, with no thickness constraint. Based on this, we use the data to describe the complex relationship between the presence and nature of knotting and size, thickness and shape of the random walk. We extend the current understanding of excluded volume by expanding the range of analysis of how the squared radius of gyration scales with length and thickness. We also examine the profound effect of thickness on knotting in open chains. We will quantify how thickness effects the size of thick open chains, calculating the growth exponent for squared radius of gyration as a function of thickness. We will also show that for radius $r\leq 0.4$, increasing thickness by $0.1$ decreases the probability of knot formation by 50\% or more.  
\end{abstract}

\section{Introduction}

Long strings of connected molecules, called polymers, are central structures in the life and physical sciences, as well as engineering. Prominent examples are DNA, proteins, polystyrene, and silicone. Many of the physical properties of polymers arise purely from the connectivity of their monomers rather than the chemical properties of the monomers themselves \cite{tangled}. Compounds made from the same chemicals with the same types of bonds but without the linearity of polymers fail to demonstrate the same special properties of polymers \cite{flory}. For example, polymeric liquids, such as chewing gum, dough and egg whites, demonstrate high viscosity and visible elasticity, and these characteristics are attributed to their string-like polymeric components \cite{DoiEdwards}.

Knotting plays a critical role in the unusual characteristics of these substances. With regard to DNA, Fiers and Sinsheimer first showed that DNA of bacteriophage $\varphi$X174 is a single-stranded ring \cite{Fiers62}. Because of this closure condition, knotting could potentially be captured in the spacial structure of the DNA. In 1976, Liu et al.\ discovered examples of knotted DNA, which was followed by the discovery of evidence for the existence of topoisomerases, enzymes which can knot and unknot DNA, suggesting that the topology of DNA has some fundamental functional role  \cite{Cham01, Liu76, wasserman1985}. Likewise, it has been demonstrated that rubber's elasticity is a product of the multitude of cross links that hold the polymer chains in position, rather than its complex chemistry. This too is the result of spatial entanglement, knotting and linking, of the polymeric components \cite{entangled, tangled}. While it is known that the topology of a ring polymer plays a critical role, affecting size, gel-electrophoretic mobility, resistance to mechanical stretching and behavior under spacial confinement, even basic characteristics of size and shape, including how the size of a macromolecule scales as a function of length, are not well understood for knotted ring polymers, though simulations are allowing exploration of this interaction  \cite{critical, Matsuda2003, orlandini1998, saka2008, tubiana2011}.  Understanding the complex relationship between shape, size and knotting in polymers could allow for the controlled production of new materials on the molecular level, rather than relying on trial and error \cite{tangled}.

Everyday experience suggests that knotting is not a phenomenon restricted to closed loops. Knotting in long, open chains is quite familiar and natural, although the historical mathematical study of knots has been concentrated on closed loops \cite{metastable}. Although open chains can be always be isotoped to an unknotted configuration and are therefore not {\it topologically} knotted, the physical characteristics of an open chain are strongly affected by the degree of entanglement, and we may consider these to be knotted \cite{micheletti2011}. For such open knots, also called knotted arcs, much of the difficulty lies in formalizing the definition of knotting and methods of knot identification \cite{orlandini}. Determining the knot type of an open chain often strongly depends on closure scheme, and may be a statistical quantity, differing greatly from the discrete definition of knotting for closed chains \cite{micheletti2011}. Being able to define when an arc is knotted can be utilized to study the degree of knot localization in closed loops \cite{tubiana2011}.

Polymer conformations are likewise affected by the characteristics of the solvents in which they are submersed. In the case of good solvent regimes, the polymer prefers contact with the solvent over self contact, and self repels \cite{degennes, DoiEdwards, rawdonbook}. With bad solvent regimes the polymer prefers self contact over solvent contact, which causes self attraction and leads to collapse or globular behavior \cite{degennes, DoiEdwards, rawdonbook}. Between these we have $\Theta$-solvents, where the attractive and repulsive forces are at equilibrium \cite{degennes, DoiEdwards, rawdonbook}. Given some polymer and solvent combination, the $\Theta$ temperature refers to the temperature at which the attractive and repulsive properties of the polymer are balanced, and the polymer behaves globally like an ideal chain, i.e. a random walk \cite{DoiEdwards}. Although our models do not include fluid dynamics or temperatures, we still use the language of solvent based behavior to describe these distinct classes of large scale behavior. 

Experiments can be done at the $\Theta$ point so that the global effects of excluded volume are eliminated \cite{flory}. Such polymers and related models have size characteristics with Gaussian statistics and have average radius of gyration proportional to $N^{1/2}$ where $N$ is the length of the chain \cite{degennes}. Many models exhibit this behavior, including Gaussian chains, ideal chains and freely rotating chains \cite{DoiEdwards}. 

We will consider a more general class of polymers in a good solvent regime, that is, polymer models that include some notion of thickness and self repulsion. Polymers are not arbitrarily thin, and their thickness imposes an excluded volume constraint: two segments of the chain are not allowed to occupy the same position in space or intersect. This interaction makes the average size of the polymers larger, as segments are forced to be further apart \cite{DoiEdwards, vologodskii}. These models are also called self-avoiding walks \cite{vologodskii}. 

The mathematical properties of self-avoiding walks are more complex than $\Theta$-condition walks \cite{DoiEdwards}. The interaction between segments of the chain makes exact calculation of physical properties prohibitively difficult \cite{DoiEdwards, flory,gans}. The mean square end-to-end distance of self avoiding walks has been estimated to scale approximately as $N^{1.2}$, as opposed to $N$ for ideal walks \cite{degennes, micheletti2011}. Corresponding experiments with actual self repelling polymers and numerical studies of self avoiding walks on the simple cubic lattice have shown that scaling can range between $N^{1.1}$ and $N^{1.2}$ \cite{degennes}.  While we will find more detailed experimental scaling values as a function of the thickness to expand these results, the fact remains that these walks are on average larger or ``swollen'' compared to their ideal counterparts \cite{flory}. Using microscopy techniques on DNA samples, Valle et al$.$ showed that the end-to-end distribution of long DNA molecules matches a pure self-avoiding walk distribution, suggesting that the study of models with excluded volume will be critical in understanding the behavior of many polymers \cite{valle}. 
 
In addition to the above effects of excluded volume, the introduction of thickness has very important implications to knotting and linking. Freedman, He and Wang proposed a ring conformation in 1994 that, while topologically unknotted, required increasing length or decreasing thickness to be continuously deformed to a standard ring \cite{freedman}. Such a conformation would be physically knotted, but not topologically knotted, and is commonly referred to as a Gordian knot \cite{gordian}. Stasiak and Pieranski applied the SONO algorithm, which attempts to move to the standard unknotted conformation through local isotopies, and found configurations that resisted such local deformations, strengthening the case for the existence of physical knots.\cite{gordian}. In 2012, Coward and Hass proved that Gordian links exist, finding a pair of thick rings, topologically unlinked, that cannot be continuously deformed to disjoint rings without increasing their lengths or decreasing their thickness \cite{coward}. These examples suggest that local, length preserving, isotopies cannot possibly connect the space of thick rings or links, and therefore that an adequately random generation method cannot rely on such local isotopies. 

These difficulties in studying the geometric and topological properties of polymer models with excluded volume emphasize the need for a robust generation method that will generate data for all possible thicknesses and lengths  while ensuring that conformations are being generated with the correct probabilities \cite{michels}. To this end, we want a generation method in which the likelihood of sampling a given configuration is the same for all possible configurations. We will prove that our generation method is ergodic, and therefore the steady state distribution is uniform over the set of configurations. We will numerically confirm that the averages and scaling of the squared radius of gyration with no thickness matches the theoretical averages and scaling of random walks. 

The most studied model of self-avoiding walks is lattice walks where no lattice point is visited more than once. These self-avoiding walks can be modified to model stiffer polymers by adding an energy penalty for consecutive segments that are not collinear \cite{micheletti2011}. The pivot algorithm is a Monte Carlo method to generate self-avoiding walks on the lattice, proposed by Lal in 1968 \cite{lal}. Given some self-avoiding walk on the simple cubic lattice, a new sample is generated as follows: a site along the walk, the pivot point, is chosen at random, and a random symmetry is applied to the part of the walk subsequent to the pivot point. The result is identical to the original walk before the pivot point, and the terminal segment is replaced a lattice symmetric version of the terminal segment. The resulting walk is accepted if it is self-avoiding. This method has been proven to be ergodic \cite{madrassokal}. The probability of accepting a move is order $N^{-0.19}$ for a walk of length $N$ \cite{madrassokal}. While this probability goes to zero, it does so slowly and allows for reasonable run times for simulations \cite{madrassokal}. Further, Madras and Sokal showed, in 1987, that any lattice Monte Carlo method that relied on local, length preserving moves would fail to be ergodic \cite{madras_ergo}. 

In 1988, Kleinin and Vologodskii used an ergodic method, which we will refer to as the benchmark method, to generate open and closed chains, off-lattice, with relatively small thicknesses (less than one tenth of the length of each segment) \cite{vologodskii}. They showed that even these very modest changes resulted in a dramatic decrease in knotting and linking compared with the ideal case, but the method could not supply sufficient data for very long or thick chains  \cite{vologodskii}.   

We implement an ergodic method where a configuration, with thickness, is modified by a reflection, through a random plane, of the terminal end. This is inspired by the generation via lattice symmetries in Lal's work, and is similar to Stallman and Gans' work performing rotational symmetries on off-lattice walks with fixed bending angles, and subsequent work by Pederson, Laso and Schutenberger \cite{lal,micelles,gans}. Unlike models on the lattice, the infinite number of possible off-lattice configurations complicates the discussion of ergodicity. As with the pivot method, we will show that these moves will connect the space of walks accommodating a specified thickness and that our method samples the state space correctly. This will allow us to generate much thicker off-lattice walks, extending Kleinin and Vologodskii's results about size and knotting in thick chains and rings to much thicker chains and rings. The data we generate will allow us to answer our most fundamental question: what is the relationship between the size, shape and knottedness of a linear polymer as a function of its thickness? 

\section{Self Avoiding Walks: Definitions and Notation}

Our polymer models will have as their primary structure a unit edge length random walk. We will be using the same definition for thickness as Millett, Piatek and Rawdon, with some simplifications due to the fact that our underlying structure is an equilateral random walk \cite{thickness}. Our open walks of $n$ edges will generically be called $W_n$, and will have vertices $v_0, ... ,v_n$, with $v_0$ the origin. Let $s_1, ... ,s_n$ denote the unit length edge segments of $W_n$, and let $\varphi_i$ be the angle between $s_i$ and $s_{i+1}$.

In order to define thickness, let us first define for each $x \in W_n$, where $x$ may be either a vertex or a point along an edge of $W_n$,  $d_x: W_n \rightarrow \R$ where $d_x(y) = \dbar x-y\dbar$. 

\begin{definition} As in \cite{thickness}, we call $y$ a turning point for $x$ if $y$ is a critical value for $d_x$, that is, $d_x$ changes from increasing to decreasing or from decreasing to increasing at $y$. The doubly-critical points of $W_n$ are defines as all pairs $x$ and $y$ in $W_n$ such that $x$ is a turning point for $d_y$ {\em and} $y$ is a turning point for $d_x$. Further, the minimum distance between all such doubly-critical pairs of points, taken over the entire walk $W_n$, is  $\dcsd(W_n)$, the doubly-critical distance for $W_n$.  
\end{definition}

Controlling $\dcsd(W_n)$ will constitute the long range constraint: doubly critical points of $W_n$ must be sufficiently far away from each other. We will also implement a corresponding angle or bending constraint: 

\begin{definition} Let $W_n$ be an $n$ edge random walk, and let $\varphi_i$ be the angle between $s_i$ and $s_{i+1}$. Let $\varphi(W_n)$ be the minimum of all of these angle, $\varphi(W_n) = \ds \min_{i = 1, ... , n-1} \varphi_i$.  
\end{definition}
 
Physically, the angle constraint is the local expression of being able to accommodate a tube of radius $r$, while $\dcsd(W_n)$ is the distal expression of accommodating the same tube. If we desire that if a walk accommodates a tube of radius $r$, then the angle between adjacent segments $s_i$ and $s_{i+1}$,  $\varphi_i$, should be sufficiently large, allowing two perpendicular disks of radius $r$ through the midpoints of the segments $s_i$ and $s_{i+1}$ to not intersect. This is equivalent to requiring that $\varphi_i > 2 \arctan(2r).$   
 
\begin{definition} Let $W_n$ be a random walk with position vectors $v_0$, $v_1$, ... $v_n$, with $v_0$ the origin. Then we say $W_n$ can accommodate a tube of radius $r$ if the following conditions are met: 
\begin{itemize} 
\item {\bf Long Range Interaction}: The distance between any doubly-critical pair of points is bounded below by $2r$: $\dcsd(W_n) > 2r$, and 
\item {\bf Short Range Interaction}: For adjacent edges, $s_i$ and $s_{i+1}$, if there were perpendicular disks centered at the midpoint of each segment, both of radius $r$, the two should not intersect. This is equivalent to saying that the angle between the two should be greater than $2\arctan(2r)$: $\phi(W_n)\geq 2\arctan(2r)$. 
\end{itemize}
We may also say that $W_n$ has thickness $r$ when it can accommodate a tube of radius $r$. Define $\W_{(n,r)}$ to be the space of all random walks, $W_n$, that can accommodate a tube of radius $r$.    
\end{definition}  

We note that while a walk $W_n$ may accommodate a tube of radius $r$, that may not be the biggest tube it can accommodate. A configuration that can accommodate a tube of radius $r$ can also accommodate a tube of any $r'$ with $r' < r$ as well, as the smaller tube is embedded in the larger, so the long range constraint is satisfied, and the angles between adjacent edges are larger than necessary, so the local constraint is also satisfied.  

While we are using the same definition and implementation of thickness as Millett, Piatek and Rawdon, there are numerous other methods for modeling walks with excluded volume, including self avoiding walks on the simple cubic lattice and variations, chains of hard spheres with either freely jointed or fixed angles and rod and bead models \cite{universal_knotting, madrassokal, thickness, micelles,gans}. (While the squared radius of gyration for these models might vary depending on the model, the scaling behavior is very similar, as we will discuss in Section 7.)  

\section{Reflection Method: Introduction}

Given some $r$ we wish to generate a random walk of length $n$, $W_n$, that can accommodate a tube of radius $r$, and we want this generation method to be capable of sampling {\it all} such walks. The reflection method samples the space of such walks by performing a random sequence of two moves: single and double reflections. 

A configuration is altered in a manner similar to Lal's pivot method: given some walk, we will choose a vertex along the chain and perform a reflection of the terminal end through a random plane through the chosen vertex. (In fact, for efficiency we will select a random plane so that the resulting reflection will have an allowable angle at the reflection site.) If we are performing a single reflection move, we will then check to see if the new walk supports a tube of radius $r$. If such a tube can be supported, we accept the new configuration. If it does not accommodate a tube of radius $r$, we throw out the altered chain and begin anew, with a new random vertex and a new random plane. A double reflection is similar, except we perform two random (allowable) reflections, through different random vertices, before checking if our thickness conditions have been violated. Below are more detailed descriptions of each type of move. 

We ensure that this generation method samples all configurations in the space of walks accommodating a certain thickness, $r$. This is equivalent to showing that there is a finite sequence of possible reflection moves that takes us from any configuration to any other, thereby connecting the space of all walks with thickness $r$.    

\subsection{Single Reflection Moves}
 
Beginning with a configuration $W_n$ that accommodates a tube of radius $r$, we perform a reflection move by selecting a vertex, $v_i$, with $i \neq 0,n$, at random. Then we select a random (allowable) plane through $v_i$, $P_i$. We reflect the points $v_{i+1}$, $v_{i+2}$, ... , $v_n$ through this plane to obtain the points $\hat{v}_{i+1}$, $\hat{v}_{i+2}$, ... , $\hat{v}_n$. Then we consider the new $\hat{W}_n$ consisting of position vectors $v_0$, ... $v_i$, $\hat{v}_{i+1}$, $\hat{v}_{i+2}$, ... , $\hat{v}_n$, as in Figure \ref{Single_EX}. If $\hat{W}_n$ can accommodate a tube of radius $r$, then $\hat{W}_n$ is accepted as the new configuration. Otherwise, the attempt fails, and we try another reflection move on $W_n$.

\begin{figure}[!htp]
  \begin{center}
    \includegraphics[scale=.8]{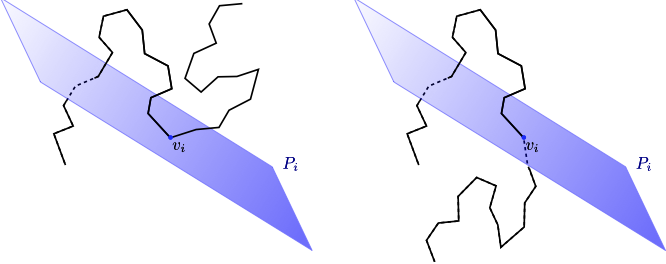}
  \end{center}
 \caption{A reflection through the plane $P_i$ of the terminal end of the chain.}
\label{Single_EX}
\end{figure}

\subsection{Double Reflection Moves} 

A double reflection move consists of performing two reflections. Selecting two random vertices, $v_i$ and $v_j$, with $i<j$, we also select random planes, $P_i$ and $P_j$, through these points. Again, for efficiency in simulation, we will only select planes such that the subsequent reflections will not violate the short range constraints for our given $r$. As with a single reflection moves, we reflect the points $v_{i+1}, ..., v_n$ through the plane $P_i$ obtaining $\hat{v}_{i+1}$, $\hat{v}_{i+2}$, ... , $\hat{v}_n$. Then we perform a second reflection through the plane $P_j$ of the points $\hat{v}_{j+1}$, $\hat{v}_{j+2}$, ... , $\hat{v}_n$ obtaining $\hat{\hat{v}}_{j+1}$, $\hat{\hat{v}}_{j+2}$, ... , $\hat{\hat{v}}_n$, as in Figure \ref{Double_EX}. Our new configuration, $\hat{\hat{W}}_n$ has vertices $$v_0,...,v_i, \hat{v}_{i+1}, ..., \hat{v}_j, \hat{\hat{v}}_{j+1}, ...,\hat{\hat{v}}_n.$$ If $\hat{\hat{W}}_n$ can accommodate a tube of radius $r$, $\hat{\hat{W}}_n$ is accepted as the new configuration. Otherwise, the attempt fails, and we try another reflection move on $W_n$.

\begin{figure}[!htp]
  \begin{center}
    \includegraphics[scale=.8]{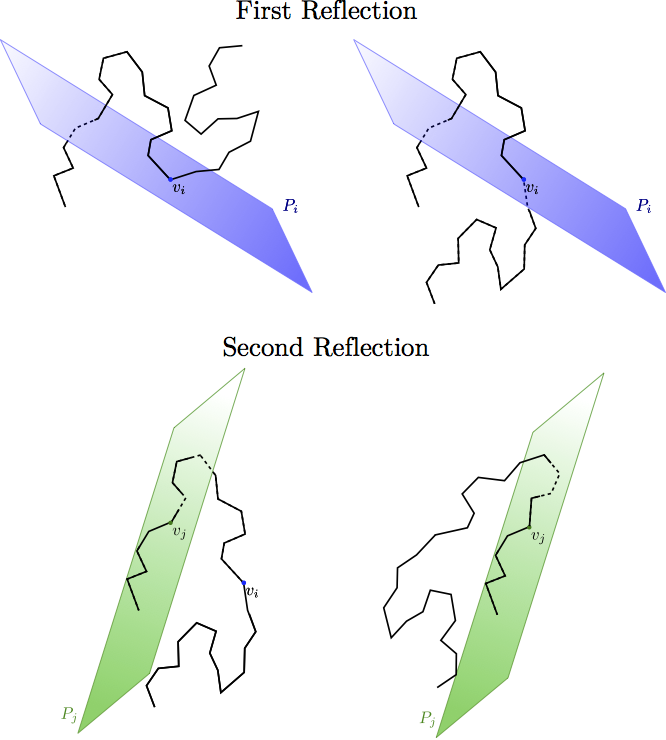}
  \end{center}
 \caption{A double reflection, first through the plane $P_i$, then through the plane $P_j$. }
\label{Double_EX}
\end{figure}

\section{Ergodicity of the Reflection Method}

We let the space of all walks be \(\W_{n} = \prod_n S^2\). We call the space of walks with thickness greater than or equal to \(r\), \(\W_{(n,r)}\). We form a Markov chain with state space \(\W_{(n,r)}\) and noise parameter \(X = (\Z_{n-1}\times \R P^2)^2\) with uniform probability distribution. This noise parameter is a pair of reflections, with the \(\Z_{n-1}\) coordinate indicating at which vertex to break the curve and the \(\R P^2\) coordinate determining the plane of reflection. We note that one of these reflections may be the trivial reflection, and therefore this encompasses single reflections as well. The Markov function \(F:\W_{(n,r)}\times X\rightarrow \W_{(n,r)}\) is then applying the pair of reflections in the \(X\) input to the walk in the \(\W_{(n,r)}\) coordinate. If the result is in \(\W_{(n,r)}\) it is kept, and if not we return the input walk.

A Markov chain \(F\) is forward accessible if for every starting point \(W_n\), the set of points in the state space \(\W_{(n,r)}\) which can be reached from \(W_n\) using a path on the interior of a smooth section has non-empty interior.

A Markov chain naturally creates a family of probability distributions \(P_m(x,A)\) which is the probability of landing in \(A\) after starting at \(x\) and going exactly \(m\) steps. From those we can take any distribution \(a(m)\) on the natural numbers and build transition kernels \(\kappa_a(x,A) = \displaystyle \sum_{m\in \N} P_m(x,A)a(m)\). The Markov chain \(F\) is a \(T\)-chain if there is a continuous piece \(T\) to some transition kernel \(\kappa_a\). Such a function \(T\) is a continuous piece if \(T \leq \kappa_a\), \(T(x,W_{(n,r)}) \neq 0\) for all \(x\) and \(T(\cdot,A)\) is lower semi-continuous for every \(A\). The last of these is why it is called a continuous piece.

A sequence of probability distributions \(\mu_n(Y)\) is tight if for every \(\epsilon > 0\) there exists a compact set \(C\) with \(\liminf_{n\rightarrow \infty}\mu_n(C) > 1-\epsilon\). When dealing with a Markov chain, we ask if a Markov chain is \emph{bounded in probability on average}, which means the sequence of probability distributions \(\overline{P_n(x,\cdot)} := \displaystyle \frac{1}{k}\sum_{n=1}^{k} P^n(x,\cdot)\) is tight. Since our state space \(\W_{(n,r)}\) is compact, any sequence of probability distributions is tight, simply by taking \(C = \W_{(n,r)}\).

A Markov chain is Harris if every set with positive borel measure is expected to be reached an infinite number of times regardless of the starting position. F is positive if there is a probability measure \(\mu\) on the state space which is invariant under iteration by the Markov move. These two properties which are often used together are referred to as positive Harris.

A Markov chain is periodic with period \(d\) if there exists a collection of \(d\) disjoint non-empty closed sets \(C_i\) with the probability of going from \(C_i\) to \(C_{i+1}\) is \(1\) for every \(i\). A Markov chain is aperiodic if the only period is \(1\).

\begin{theorem}
Given any walk \(W_n\) in \(\W_{(n,r)}\) there exists a finite sequence of double reflections (and single reflections) $\rho_i$ which take \(W_n\) to the straight walk, and with each reflection $\rho_i$ being on the interior of a neighborhood of moves \(R_i\) whose images are all in \(\W_{(n,r)}\). Thus, the finite sequence taking \(W_n\) to the straight walk is on the interior of a smooth section of \(F\).
\end{theorem}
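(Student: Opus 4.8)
The plan is to build the required sequence by straightening the walk one angle at a time, working inward from the free end $v_n$, and then to read off the neighborhoods $R_i$ from the continuity of $F$ in its noise parameter. First I would record two structural facts that make the bookkeeping tractable. A single reflection through a plane $P_i$ acts as a rigid isometry on the tail $v_{i+1},\dots,v_n$, so a tail that is already collinear remains a straight rod after the move; a straight rod has every interior angle equal to $\pi$ and no doubly-critical pairs, hence it is as thick as we like. Second, when we straighten inward the head $v_0,\dots,v_i$ is never touched, so at every stage it is a sub-arc of the original $W_n$ and therefore still accommodates a tube of radius $r$. Consequently the only thickness conditions that can fail during the process are the junction angle $\varphi_i$ and the doubly-critical distances between the straight tail and the original head, which is exactly the interaction the moves must be shown to control.

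With this reduction the backbone is an induction on the number of bent angles: straighten $\varphi_{n-1}$ by reflecting the lone vertex $v_n$ so that $s_n$ continues $s_{n-1}$, then straighten $\varphi_{n-2}$ by reflecting the now-straight two-segment tail so its direction aligns with $s_{n-2}$, and so on down to $\varphi_1$. After $n-1$ reflections every angle equals $\pi$ and the walk is straight. The reflection aligning the straight tail with the preceding edge is uniquely determined (its plane has normal proportional to the difference of the two unit directions), so the combinatorial skeleton of the sequence is forced; what is not automatic is that each intermediate configuration stays in $\W_{(n,r)}$.

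This is the main obstacle, and I expect it to be the delicate heart of the argument. Two things can go wrong. A forced straightening may drive the straight tail to within $2r$ of the head (a collision), or leave it at exactly $2r$ with no slack; and the untouched head, being a sub-arc of $W_n$, may itself realize the thickness bound with equality, so the intermediate state sits on the boundary of $\W_{(n,r)}$. For collisions I would exploit both the freedom in the plane and the double reflections: since a double reflection is tested for thickness \emph{only at its final state}, the tail can be routed through a thin intermediate and swung around the compact, $r$-thick head and then deposited with strict clearance, the admissible target directions forming an open cone because the set of directions whose $2r$-tube meets the head is closed and, for an $r$-thick head of bounded length, proper. The boundary issue is transient: every tight feature of the head is straightened away once its vertex, or the partner of a pinned doubly-critical pair, is absorbed into the growing straight rod, so after an initial slack-creating reflection we can keep each achieved state strictly thick, with $\dcsd>2r$ and $\varphi>2\arctan(2r)$. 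Proving that a strictly thick straightening (possibly realized as a double reflection) always exists, i.e.\ that the head's shadow never closes up the forward cone, is the quantitative step I expect to require real work.

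Finally, the neighborhood $R_i$ at each step comes for free once the achieved configuration is strictly thick. The map $F(\cdot,\cdot)$ is continuous in the plane parameter ranging over $\R P^2$, and both $\dcsd$ and $\varphi$ are continuous functionals of the vertex positions, so the strict inequalities $\dcsd>2r$ and $\varphi>2\arctan(2r)$ persist on an open set of reflections about $\rho_i$, every image of which therefore lies in $\W_{(n,r)}$. Thus each $\rho_i$ sits in the interior of such an $R_i$, the entire sequence (of length $O(n)$) lies on the interior of a smooth section of $F$, and it terminates at the straight walk, establishing the statement.
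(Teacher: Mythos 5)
Your skeleton --- freeze a prefix of the original walk, carry a straight rod at the free end, absorb one vertex per step, and recover the neighborhoods $R_i$ at the end from strict inequalities plus continuity --- breaks at exactly the step you flag as ``the quantitative step I expect to require real work,'' and the problem is not that it is hard: the claim is false. Your collision fallback is to deposit the straight rod in a direction drawn from a nonempty open cone of safe directions, justified by the assertion that the set of blocked directions is a proper closed subset of the sphere. But an $r$-thick prefix can \emph{cage} the junction vertex. Take the head to be a fine-mesh shell of radius $R$ centered at the junction, built from strands pairwise separated by just over $2r$ (this is all the thickness condition requires), with one radial strand running in to the junction, and take the original tail, of length greater than $R+2r$, coiled inside the shell; such a configuration lies in $\W_{(n,r)}$. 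A straight rod passing between two shell strands that are $2r+\epsilon$ apart comes within roughly $r+\epsilon/2<2r$ of one of them at a locally closest, hence doubly-critical, pair: thickness tolerates gaps of width just over $2r$, but threading a rod through a gap needs clearance $2r$ on \emph{each} side, i.e.\ width about $4r$. So once your induction makes the rod longer than $R$, every direction in the cone yields a final state violating the long-range condition, and the open cone is empty. Double reflections do not rescue this: you are right that they are tested only at the final state, but here it is the final state itself that is obstructed in all directions. The induction stalls, and no refinement of the direction-counting argument can restart it; the strategy of keeping the original prefix fixed must itself be abandoned.

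This is precisely what the paper's proof is structured to avoid, and the contrast shows what is missing. Instead of freezing a prefix, the paper first \emph{unfolds the whole walk}: (i) single reflections through planes supporting the convex hull --- always thickness-preserving, because the reflected arc lands on the far side of a supporting plane and so cannot approach the untouched piece --- each increase the hull diameter by at least $\sqrt{d^2+1}-d$, so after finitely many moves the two endpoints $v_0,v_n$ realize the diameter; (ii) double reflections through planes perpendicular to that diameter axis, with the interacting sub-arcs separated by slabs, remove local maxima and minima until the walk is monotone along the axis; (iii) only then does it straighten inward, which is now safe because a monotone walk cannot bring a doubly-critical pair closer than $\sqrt{4r^2+1}>2r$ --- no caging is possible for an axis-monotone configuration. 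Your step-(iii)-style straightening and your closing remark that strict inequalities plus continuity of $\dcsd$ and $\varphi$ furnish the open neighborhoods $R_i$ are both sound, but they only apply after an unfolding of type (i)--(ii), which is the content your proposal lacks.
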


\begin{proof}
Beginning with a random walk $W_n$ accommodating a tube of radius $r$, we will construct a finite sequence of single and double reflection moves taking $W_n$ to the straight configuration such that each intermediate step can also accommodate a tube of radius $r$. The construction will consist of three steps: 
\begin{itemize}
\item The first step is to use single reflections until the diameter of the convex hull is determined by the first and last vertex $v_0$ and $v_n$. 
\item Secondly, we will use double reflections to eliminate critical points (with respect to the axis determined by the diameter of the convex hull.)
\item Lastly, with a configuration that is strictly increasing with respect to some axis, we will straighten inductively using double reflection moves.  
\end{itemize}  

The following definition will be helpful in the first step outlined above.  

\begin{definition} Let the convex hull of $W_n$ be denoted $\textrm{\em Hull}(W_n)$. Let $v_i$ be some point on the surface of $\textrm{\em Hull}(W_n)$, with $i \neq 0$ and $i \neq n$. Let $Q_i$ be a plane through $v_i$ such that $Q_i \cap \textrm{\em int}(\textrm{\em Hull}(W_n)) = \emptyset$. (This is equivalent to saying that  $Q_i$ is incident with $\textrm{\em Hull}(W_n)$ at a point, edge or face of the convex hull containing $v_i$.) We may define a reflection through this plane of the points $v_{i+1}$, $v_{i+2}$, ... $v_n$ obtaining $\hat{v}_{i+1}$, $\hat{v}_{i+2}$, ... $\hat{v}_n$, as in Figure \ref{reflection1}. We define the {\em half reflection through $Q_i$} of $W_n$ to be the new random walk $\hat{W}_n$ consisting of the points $v_0$, $v_1$, ... , $v_i$, $\hat{v}_{i+1}$, $\hat{v}_{i+2}$, ... $\hat{v}_n$. 
\end{definition} 

\begin{figure}[!htp]
  \begin{center}
    \includegraphics[scale=1]{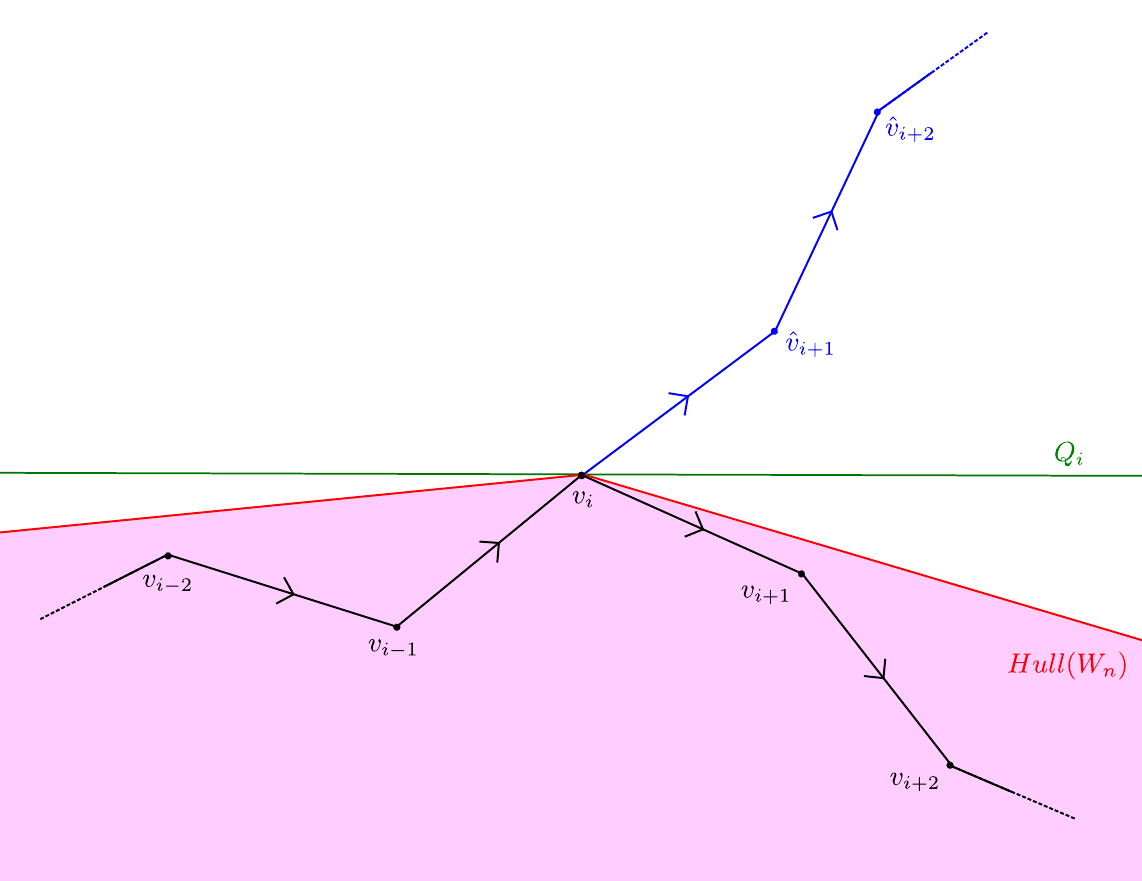}
  \end{center}
 \caption{Here we have a typical half reflection through a plane $Q_i$ incident with a point $v_i$ on the convex hull.}
    \label{reflection1}
\end{figure}

\begin{lemma}
If $W_n$ can accommodate a tube of radius $r$, then so can $\hat{W}_n$, where $\hat{W}_n$ is the result of the half reflection through $Q_i$ described above.   
\end{lemma}

\begin{proof}
There are two concerns with regard to the new thickness. The first is the long range interaction: is the distance between any doubly-critical pair of points of $\hat{W}_n$ less than $2r$ after such a reflection? The second is short range interaction, or curvature: have the angles between adjacent edges become smaller than $2\arctan(r/2)$? 

Let us address the simpler case of short range concern first. As the first $i$ vertices are unchanged, we are assured that $\phi_0$ to $\phi_{i-1}$ (where $\phi_j$ is the angle between $s_{j}$ and $s_{j+1}$) are unchanged, and therefore do not violate my curvature condition. Likewise, as reflection through $Q_i$ is an isometry on the segment defined by vertices $v_i$, $v_{i+1}$, ... $v_n$, the angles $\hat{\phi}_{i+1}$ to $\hat{\phi}_n$ are equal to their counterparts, $\phi_{i+1}$ to $\phi_n$, and therefore also unchanged. Thus the only problematic location is at $v_i$.

Without loss of generality, we may assume that $v_i$ is the origin, and $Q_i$ is the $yz$-plane. Then we have two unit length vectors, $(a,b,c)$ and $(d,e,f)$, rather than $s_{j}$ and $s_{j+1}$. Because we also know that $s_{j}$ and $s_{j+1}$ are on the same side of $Q_i$, we may further assume that the $x$ component of both points is positive, that is, $a, d > 0$. We have diagrammed this scenario in Figure \ref{trap}. 

\begin{figure}[!htp]
  \begin{center}
    \includegraphics[scale=1]{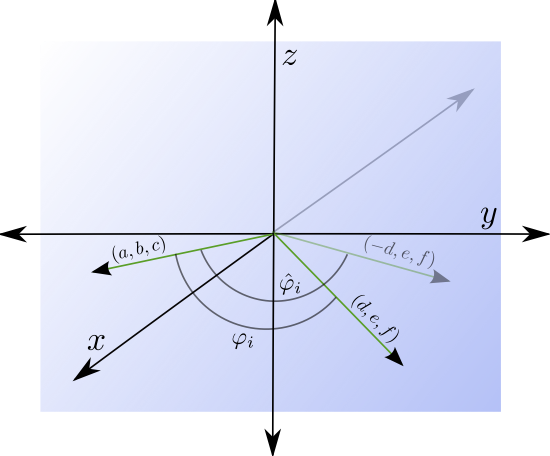}
  \end{center}
 \caption{We claim that the new angle $\hat{\phi}_i$ is greater than $\phi_i$.}
    \label{trap}
\end{figure}

We know that $\cos(\varphi_i) = ad + be + cf$ and that $\cos(\hat{\varphi}_i) = -ad + be + cf$. Because $a,d > 0$, we have that $\cos(\varphi_i) > \cos(\hat{\varphi}_i)$ and therefore $\varphi_i < \hat{\varphi}_i$, so we have not violated our short range conditions.  

Now let us consider the long range condition. As with the short range interaction, no two edges from the subsegment consisting of points $v_0$, ..., $v_i$ violate the long range interaction constraint as these edges are unchanged by the reflection. Likewise the subsegment $v_i$, $\hat{v}_{i+1}$, ..., $\hat{v}_n$, has only been reflected and therefore pairwise distances have been preserved. What remains to show is that no point from the first subsegment is within $2r$ of any point from the second subsegment. 

Suppose not, then two points, $a \in s_h$ and $b \in s_j$ with $h < i < j$ have become a too close pair of doubly critical points following a reflection of this type, that is, $|a-\hat{b}|<2r.$ Define the pill of a segment $s_i$ to be the set of points within $r$ of $s_i$, that is $$P_i = \{z \in \R^3 | d(z,s_i)< r\}.$$ Then we have two cases to consider:
\begin{itemize} 
\item $P_h \cap \hat{P}_j = \emptyset.$ The points $a$ and $b$ are closer than the points $a$ and $\hat{b}$. Therefore, if $P_h \cap \hat{P}_j = \emptyset$, $a$ and $\hat{b}$ must be further than $2r$ from each other and cannot violate the long range thickness condition, as in Figure \ref{pills}.     
\begin{figure}[!htp]
  \begin{center}
    \includegraphics[scale=.45]{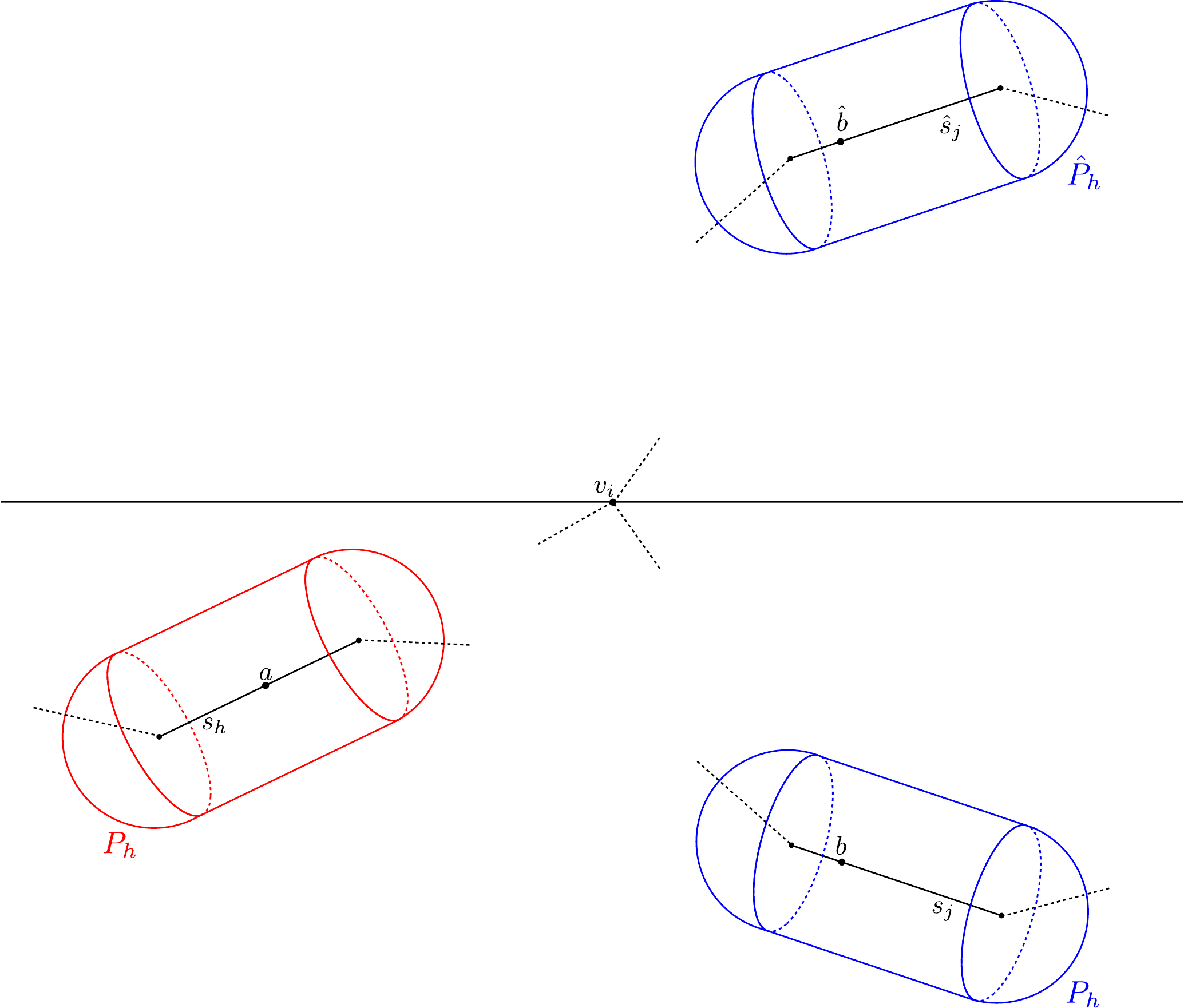}
  \end{center}
 \caption{If $P_h$ and $P_j$ do not intersect, then $P_h$ and $\hat{P}_j$ also do not intersect. }
    \label{pills}
\end{figure}

\item $P_h \cap \hat{P}_j \neq \emptyset.$ As above, if $a$ and $\hat{b}$ are not within $2r$ of each other, we have no issue. So assume that $d(a,\hat{b})\leq 2r$. In order for $a$ and $\hat{b}$ to become doubly critical, there is a minimum number of edges that must be between between $s_h$ and $\hat{s}_j$, namely, half the number of edges in a regular polygon with interior angles greater than or equal to $2\arctan(2r)$, as in Figure \ref{polygon}. Regular polygon geometry leads to this number being $\left\lfloor \ds \frac{\pi}{\pi-2\arctan(2r)}\right\rfloor.$ We conclude that there must be at least this many edges between $s_h$ and ${s}_j$. 

\begin{figure}[!htp]
  \begin{center}
    \includegraphics[scale=.1]{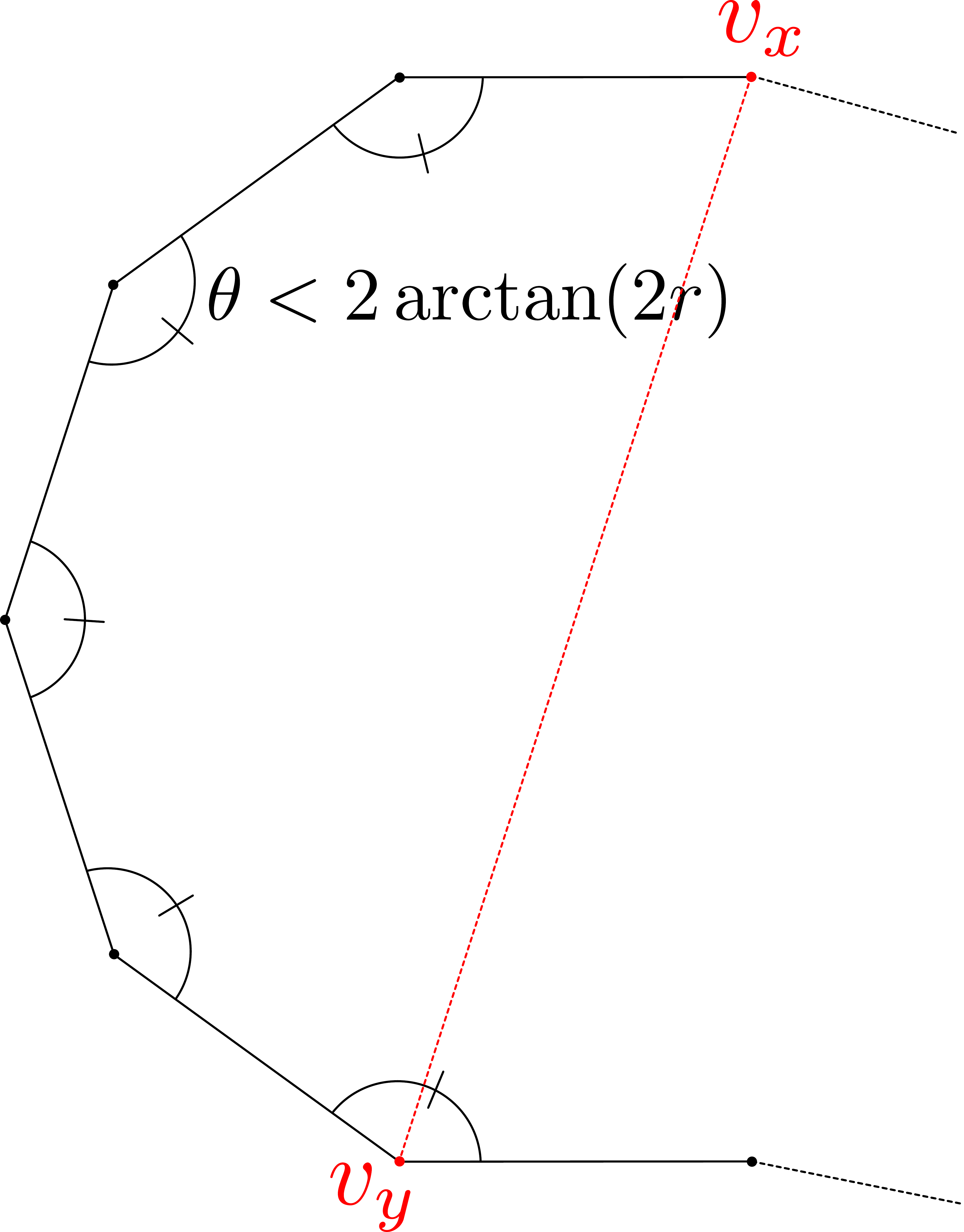}
  \end{center}
 \caption{There must be at least $\left\lfloor \ds \frac{\pi}{\pi-2\arctan(2r)}\right\rfloor$ edges between a pair of doubly critical points $v_x$ and $v_y$.}
    \label{polygon}
\end{figure}

Because $a$ and $b$ are within $2r$ of each other, they are therefore not a pair of doubly critical points, and we may assume that for every point along the segment between $a$ and $b$, the distance from $a$ to $b$ is strictly increasing. Likewise for the distance from $b$ to $a$, by symmetry. This confines the segment, and it must be contained in the union of the two spheres of radius $d(a,b)$ centered at $a$ and $b$. We will consider how many edges can fit between $a$ and $b$ in this union.  

\begin{figure}[!htp]
  \begin{center}
    \includegraphics[scale=.15]{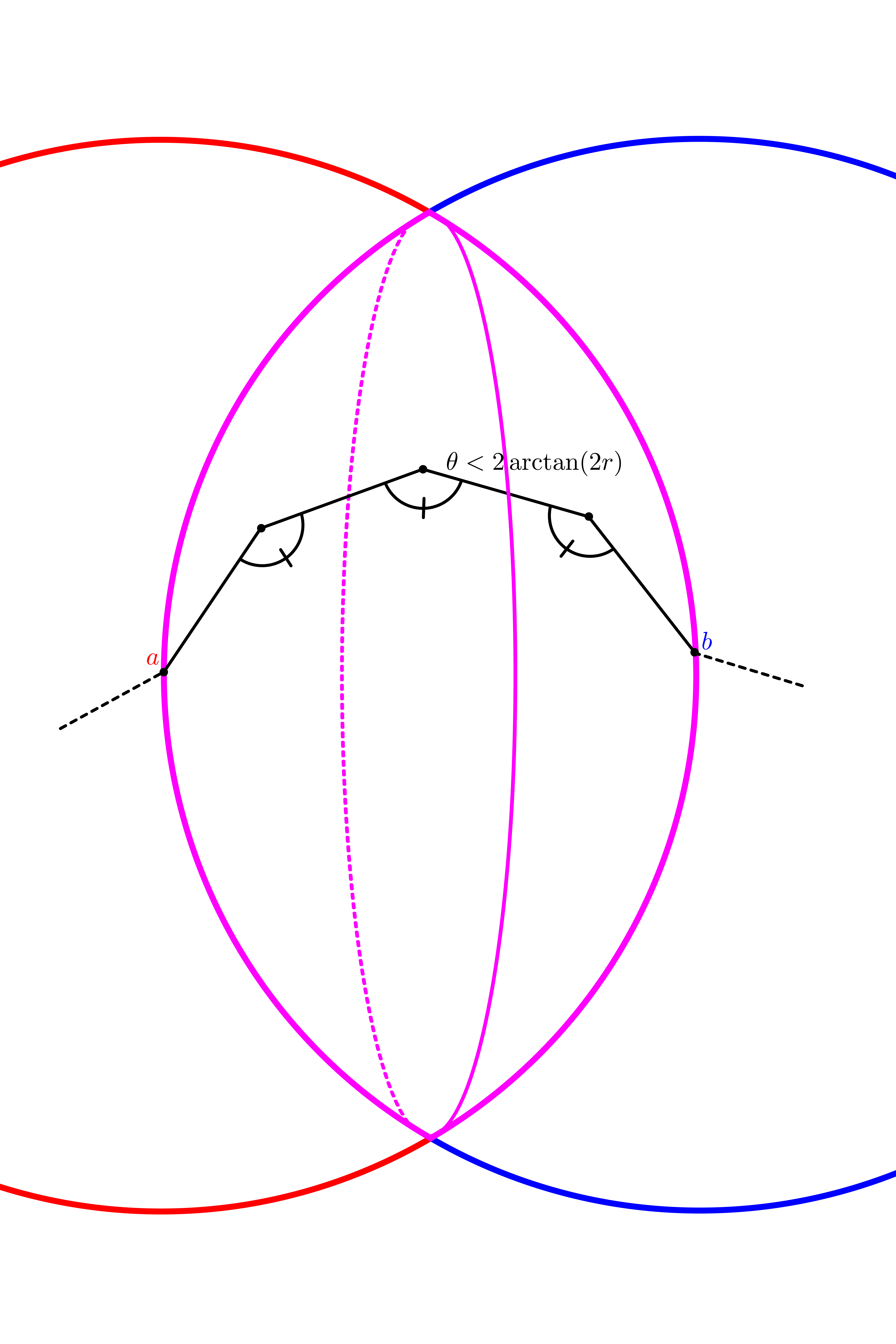}
  \end{center}
 \caption{For the pills $P_h$ and $P_j$ to intersect, without $a$ and $b$ being doubly critical, the maximum number of edges between them is $ \ds \frac{2\arctan(2r)}{\pi - 2\arctan(2r)}$. }
    \label{spheres}
\end{figure}

The most edges one can fit into this union of spheres without violating the short range (angle) restriction or having a doubly critical pair will be a segment of a regular $n$-gon where the internal angle $\theta$ is greater than or equal to $2\arctan(2r)$, as in Figure \ref{spheres}. The diagonal of this arc, which we will call $d_m$, must be less than $d(a,b)$ (and therefore also less than $2r$) in order to be a segment between $a$ and $b$. From regular polygon geometry, we know that $d_m = d\sin\left(\ds \frac{\pi m}{n}\right)$ where $m$ is the number of edges connecting the endpoints of the diagonal determined by $d_m$, and $d$ is the diameter of the $n$-gon. We can show that the diameter of this $n$-gon is $\sqrt{4r^2+1}$ and that $n = \frac{2\pi}{\pi-\theta}$. Then we find 
\begin{eqnarray*}
m& < & \ds \frac{2}{\pi - 2\arctan(2r)}\arcsin\left(\ds \frac{2r}{\sqrt{4r^2+1}}\right)\\
& <  & \ds \frac{2}{\pi - 2\arctan(2r)}\arctan(2r).\\
\end{eqnarray*}

Therefore, we know that the number of edges between $s_h$ and $s_j$ is $m$ and that $$\ds\frac{2\arctan(2r)}{\pi-2\arctan(2r)} >m>\left\lfloor \ds \frac{\pi}{\pi-2\arctan(2r)}\right\rfloor.$$
This is a contradiction, as $2\arctan(2r) \not> \pi$ for any $r$. Therefore no such doubly critical pair can exist. 
\end{itemize}

We conclude that $s_{k}$ and $\hat{s}_{j}$ do not violate our long range interaction condition for any $a$ and $b$. Therefore if $W_n$ can accommodate a tube of radius $r$, then $\hat{W}_n$ can as well. In fact, we can say more. Such a reflection has a neighborhood of similarly valid reflections, namely reflections through other planes incident with the convex hull at that point. This will be used in the proof of ergodicity. 

\end{proof}

\begin{lemma}
\label{endsout}
Suppose $W_n$ can accommodate a tube of radius $r$, and that $v_i$ and $v_j$ determine the diameter of $Hull(W_n)$. Then if one of these points is not an end point, that is, $i \notin \{0,n\}$ or $j\notin\{0,n\}$, we may perform a single reflection move that simultaneously increases the diameter by at least $\sqrt{d^2+1} - d$, where $d$ is the original diameter, and does not decrease the maximum tube radius the configuration can accommodate. 
\end{lemma}

\begin{figure}[!htp]
  \begin{center}
    \includegraphics[scale=.3]{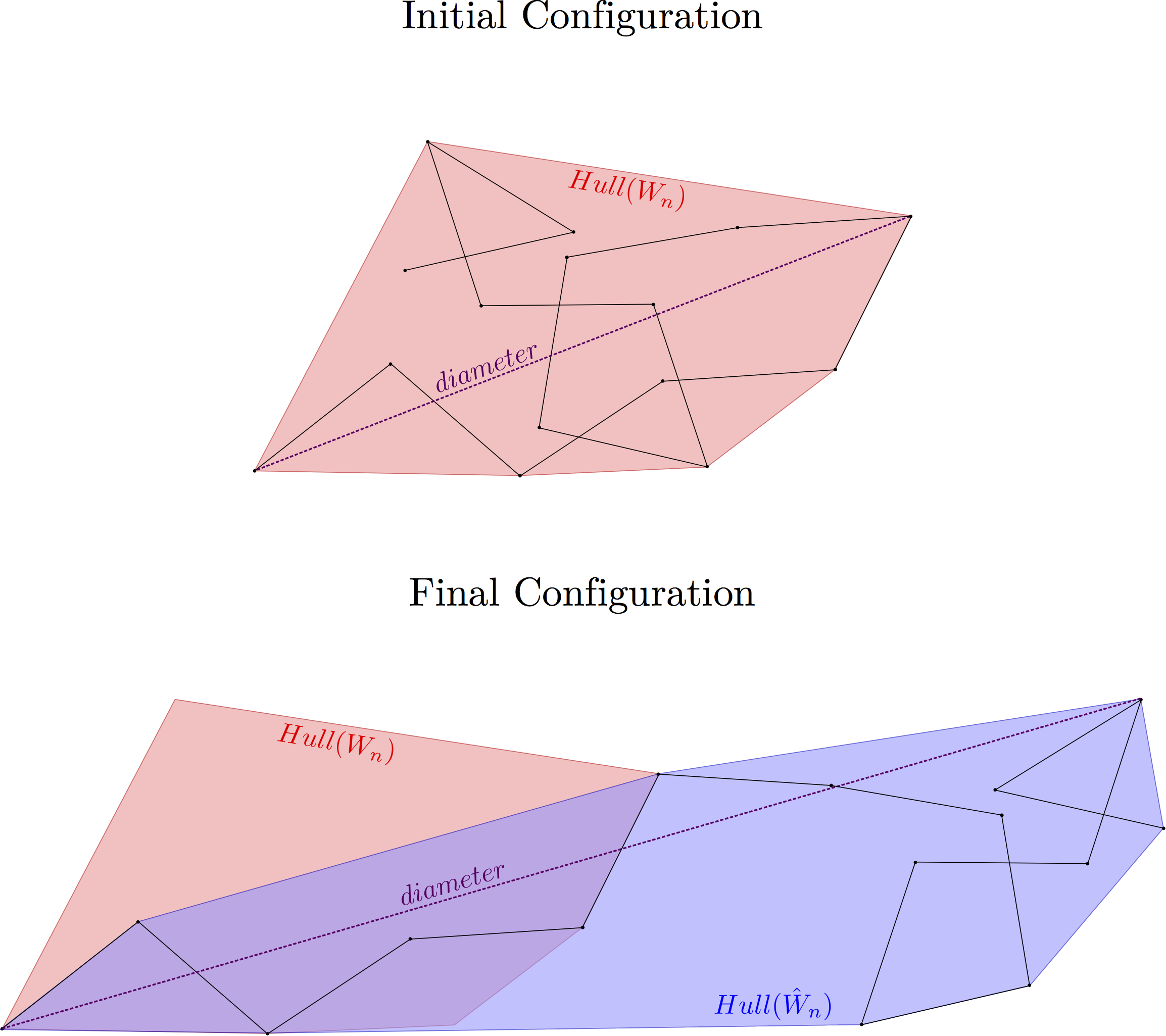}
  \end{center}
 \caption{After a reflection, the diameter of the convex hull has increased. In red, we have the convex hull of $W_n$, and in blue we have the convex hull of $\hat{W}_n$, resulting from a reflection through the right endpoint of the original diameter.}
    \label{endout}
\end{figure}

\begin{proof}
Consider a random walk $W_n$ with position vectors $v_0$, $v_1$, ... $v_n$ and convex hull $\textrm{Hull}(W_n)$. Let $v_i$ and $v_j$ be the points that determine the diameter of the convex hull, with length $d(v_i,v_j)$. Suppose $v_i$ is not an end of the random walk, that is, $i \neq 0,n$. Then make the half reflection through some suitable $Q_i$. We now may consider the convex hull of the first $i$ vertices, joined with the convex hull of the last $n-i$ vertices at $v_i$. The diameter of one of these is the same as the original diameter, $d(x_i,x_j)$. The other is non-zero, and in fact greater than or equal to $1$. Therefore the diameter of the new convex hull is greater than or equal to $\sqrt{d^2 + 1}$, as in Figure \ref{endout}.  
\end{proof} 

We may conclude, from Lemma \ref{endsout}, that we may proceed in this manner, increasing the length of the diameter of the convex hull until we have the two ends of the random walk determining the diameter of the convex hull, as the diameter is bounded by $n$. We call this specific diameter $d^*$. We now wish to consider the altered conformation $W_n'$, and all following confirmations, in relation to this axis, letting the $v_0$ end be the ``lower part'' and the $v_n$ end be the ``upper part.'' Any maxima and minima $W_n$ has with respect to this axis will be the focus of the second part of the proof.

If $W_n$ has no local maxima or minima with respect to the axis, we may move to the third part of the proof. 

If $W_n$ does have maxima or minima with respect to this axis, then we will be looking at very specific sites and performing double reflections. Consider some local minima or maxima of our walk, which may be either a point $v_i$ or an edge $s_i$. There is a plane through it and perpendicular to the original diameter $d^*$, which we will call $P_i$, as before. 

Specifically, find the local maxima $m_i$ such that the distance between its plane $P_i$ and the plane $P_n$ is minimized, as in Figure \ref{layers1}. Between $v_i$ and $v_n$ there is at least one, and possibly many local minima. We will select the local minima $n_j$, $i<j<n$, such that the distance between the plane containing $n_j$, $P_j$, and the plane containing $v_n$, $P_n$, is greatest, see Figure \ref{layers1}. 

\begin{figure}[!htp]
  \begin{center}
    \includegraphics[scale=1]{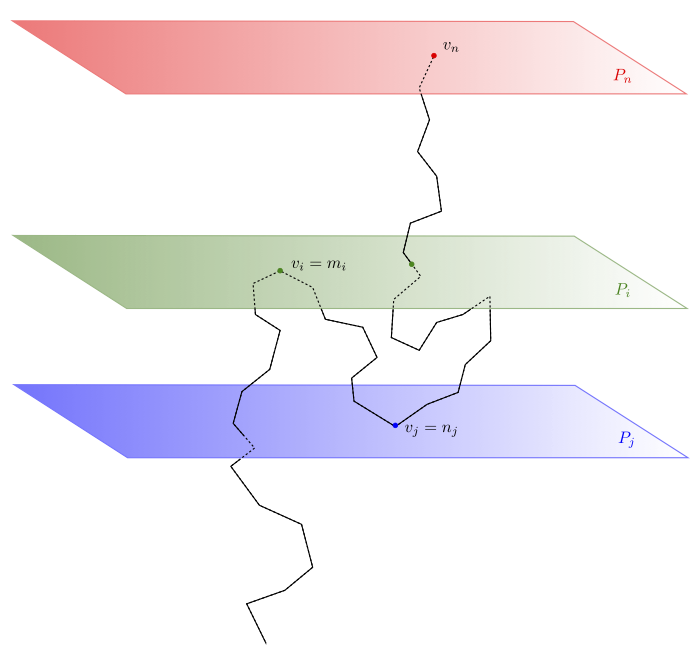}
  \end{center}
 \caption{After finding the local maxima $m_i$ and plane $P_i$, we find $n_j$ and $P_j$, the local minima between $v_i$ and $v_n$ furthest from $P_n$.}
    \label{layers1}
\end{figure}

First, we will reflect the segment from $v_j$ to $v_n$ through the plane $P_j$, obtaining $\hat{v}_{j+1}$, ... , $\hat{v}_n$. Then we have a new configuration consisting of position vectors $v_0, ..., v_j,\hat{v}_{j+1}, ..., \hat{v}_n$. Now we reflect the vectors $v_i, ..., v_j,\hat{v}_{j+1}, ..., \hat{v}_n$ through the second plane, $P_i$, as in Figure \ref{dr}, obtaining a new configuration with position vectors $$v_0, ..., v_i, \hat{v}_{i+1}, ..., \hat{v}_j,\hat{\hat{v}}_{j+1}, ..., \hat{\hat{v}}_n.$$

\begin{figure}[!htp]
  \begin{center}
    \includegraphics[scale=.6]{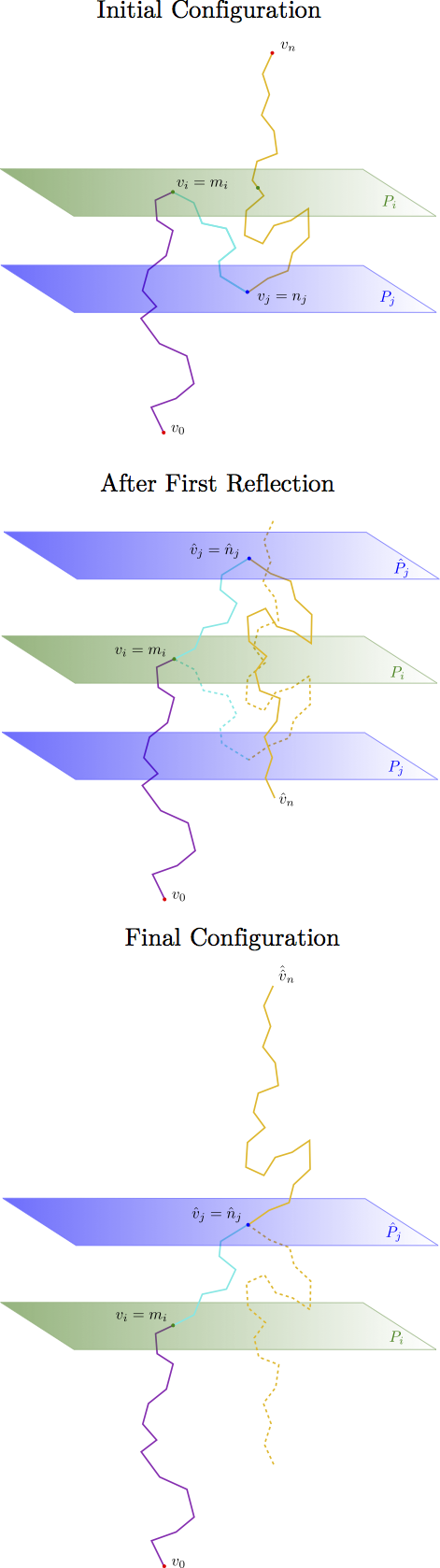}
  \end{center}
 \caption{Here we have a ``typical'' double reflection move for part two of the proof. First, we identify the local maxima, $m_i$, and local minima, $n_j$. The first reflection through the plane $P_i$ reflects the segments between $v_i$ and $v_n$, resulting in new segments between the points $\hat{v}_i, \hat{v}_{i+1},...,\hat{v}_n$. Then the reflection through $\hat{P}_j$, the reflected image of $P_j$, reflects the segments between $\hat{v}_j$ and $\hat{v}_n$. This gives us our new configuration consisting of the points $v_0, ..., v_i, \hat{v}_{i+1}, ..., \hat{v}_j,\hat{\hat{v}}_{j+1}, ..., \hat{\hat{v}}_n$.}
    \label{dr}
\end{figure}

Using the same argument as with single reflections through planes on the convex hull, $\hat{\phi_i} \geq \phi_i$ and $\hat{\phi_j} \geq \phi_j$. As in the previous proof, each of these segments has been changed by an isometry, the only possible reduction in tube radius will come from inter-segmental interactions. We refer to the three subsegments as $S_0$, from $v_0$ to $v_i$, $S_i$, from $v_i$ to $\hat{v}_j$, and $S_j$, from $\hat{v}_j$ to $\hat{\hat{v}}_n$. We also define $d_{(i,j)}$ to be the minimum distance between $P_i$ and $P_j$. 

The interaction between $S_0$ and $S_i$ is what it would be for a single reflection through a point on the convex hull, as above. Thus, interaction between $S_0$ and $S_i$ does not decrease the tube radius this walk can accommodate. Likewise for $S_i$ and $S_j$. (We note that, as with the single reflections, these reflections are in a neighborhood of valid choices.) All that remains are the potential interactions between $S_0$ and $S_j$. These segments are separated by a slab, the space between $P_i$ and $P_j$. We also note that $S_j$ is the original segment from $v_{j}$ to $v_n$ translated by $d_{(i,j)}$ perpendicular to $P_j$, as two parallel reflections are a translation.   

Now we consider two points, $a \in s_h \subseteq S_0$, and $b \in s_k \subseteq S_j$. Suppose toward contradiction that after this double reflection, $a$ and $\hat{\hat{b}}$ violate the long range interaction and $d(a,\hat{\hat{b}})<2r$ and $a$ and $\hat{\hat{b}}$ are a doubly critical pair. In order for $a$ and $\hat{\hat{b}}$ to become doubly critical, there is a minimum number of edges that must be between between $s_h$ and $\hat{\hat{s}}_k$, namely, half the number of edges in a regular polygon with interior angles greater than or equal to $2\arctan(2r).$ Regular polygon geometry leads to this number being $\left\lfloor \ds \frac{\pi}{\pi-2\arctan(2r)}\right\rfloor.$ We conclude that there must be at least this many edges between $s_h$ and ${s}_j$. 

\begin{figure}[!htp]
  \begin{center}
    \includegraphics[scale=1]{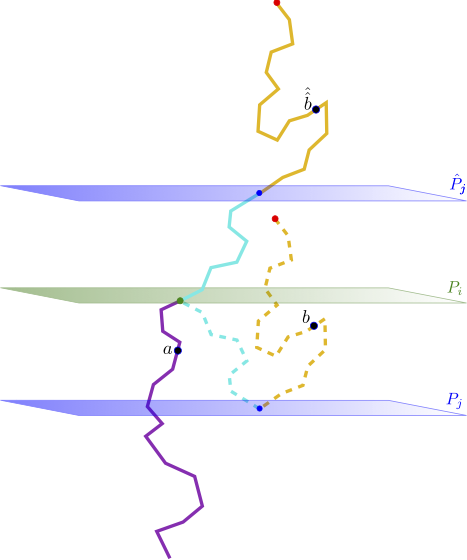}
  \end{center}
 \caption{Because $a$ is always located below the plane $P_i$, $d(a,b)<d(a,\hat{\hat{b}})$ following such a double reflection.}
    \label{double}
\end{figure}

We must also have that $d(a,{{b}})<2r,$ as the translation moves $S_j$ away from $S_0$ as in Figure \ref{double}. Because the initial configuration can accommodate a tube of radius $r$, and $a$ and $b$ are within $2r$ of each other, they are therefore not a pair of doubly critical points, and there are no pairs of doubly critical points between them. We know, therefore, that for every point $x$ along the segment between $a$ and $b$, the function $d(a,x)$ is strictly increasing. Likewise for the $d(b,x)$ for $x$ along the segment from $b$ to $a$. This confines the segment between $a$ and $b$, and it must be contained in the union of the two spheres of radius $d(a,b)$ centered at $a$ and $b$. We will consider how many edges can fit between $a$ and $b$ in this union, as in Figure \ref{spheres}.  

The most edges one can fit into this union of spheres without violating the short range (angle) restriction or having a doubly critical pair will be a segment of a regular $n$-gon where the internal angle $\theta$ is greater than or equal to $2\arctan(2r)$. The diagonal of this arc, which we will call $d_m$, must be less than $d(a,b)$ (and therefore also less than $2r$) in order to be a segment between $a$ and $b$. From regular polygon geometry, we know that $d_m = d\sin\left(\ds \frac{\pi m}{n}\right)$ where $m$ is the number of edges connecting the endpoints of the diagonal determined by $d_m$, and $d$ is the diameter of the $n$-gon. We can show that the diameter of this $n$-gon is $\sqrt{4r^2+1}$ and that $n = \frac{2\pi}{\pi-\theta}$. Then we find 
\begin{eqnarray*}
m& < & \ds \frac{2}{\pi - 2\arctan(2r)}\arcsin\left(\ds \frac{2r}{\sqrt{4r^2+1}}\right)\\
& <  & \ds \frac{2}{\pi - 2\arctan(2r)}\arctan(2r).\\
\end{eqnarray*}

Therefore, we know that the number of edges between $s_h$ and $s_k$ is $m$ and that $$\ds\frac{2\arctan(2r)}{\pi-2\arctan(2r)} >m>\left\lfloor \ds \frac{\pi}{\pi-2\arctan(2r)}\right\rfloor.$$
This is a contradiction, as $2\arctan(2r) \not> \pi$ for any $r$. Therefore no such doubly critical pair can exist. We conclude that the long range interaction between $S_0$ and $S_j$ does not increase the tube radius. We also conclude that because the segments in $S_0$ and $S_j$ are sufficiently far apart to not form a doubly critical pair with room to spare, they are also sufficiently far apart that our double reflection is in a neighborhood of valid double reflection choices. Again, this fact is used in the proof or ergodicity.   

We may proceed in this manner, always with respect to the original axis $d^*$, reducing the number of local maxima and local minima until our configuration is strictly increasing relative to the axis $d^*$. The third and last step is to straighten this configuration. 
\begin{figure}[!htp]
  \begin{center}
    \includegraphics[scale=.75]{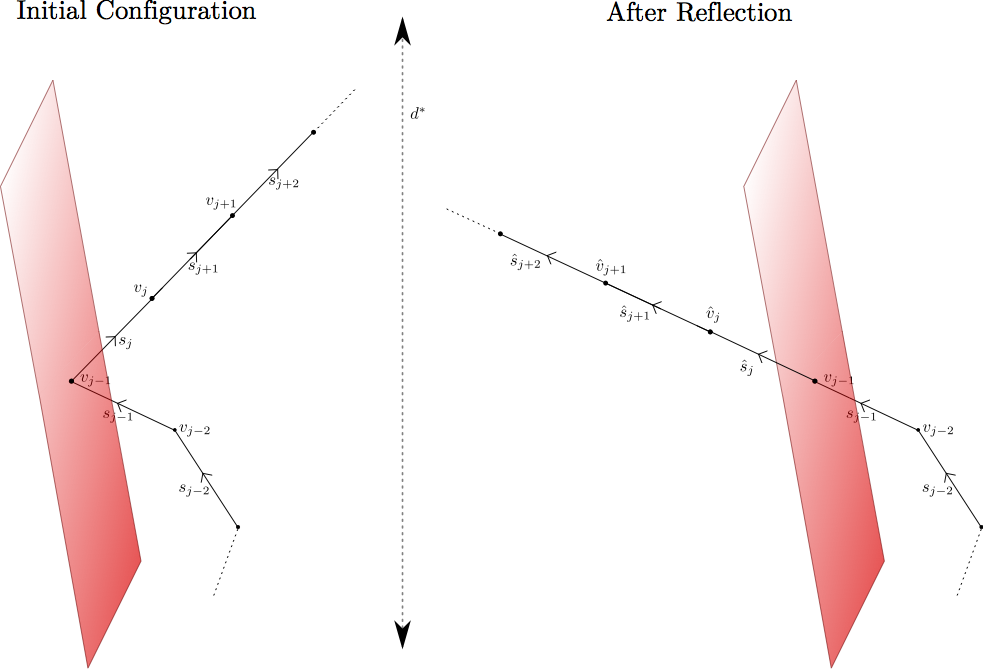}
  \end{center}
 \caption{Via a single reflection reflection, we straighten the segments $s_{j+1}, ..., s_n$ to be in line with $s_j$.}
    \label{straight}
\end{figure}
Suppose we have a configuration as in Figure \ref{straight}, where the segments $s_{j+1}, ..., s_n$ have been straightened to be in line with $s_j$. We preform a single reflection as in Figure \ref{straight} that results in the segments $s_{j+1}, ..., s_n$ being in line with $s_j$. The only remaining question is does this reflection decrease the radius?

\begin{figure}[!htp]
  \begin{center}
    \includegraphics[scale=.75]{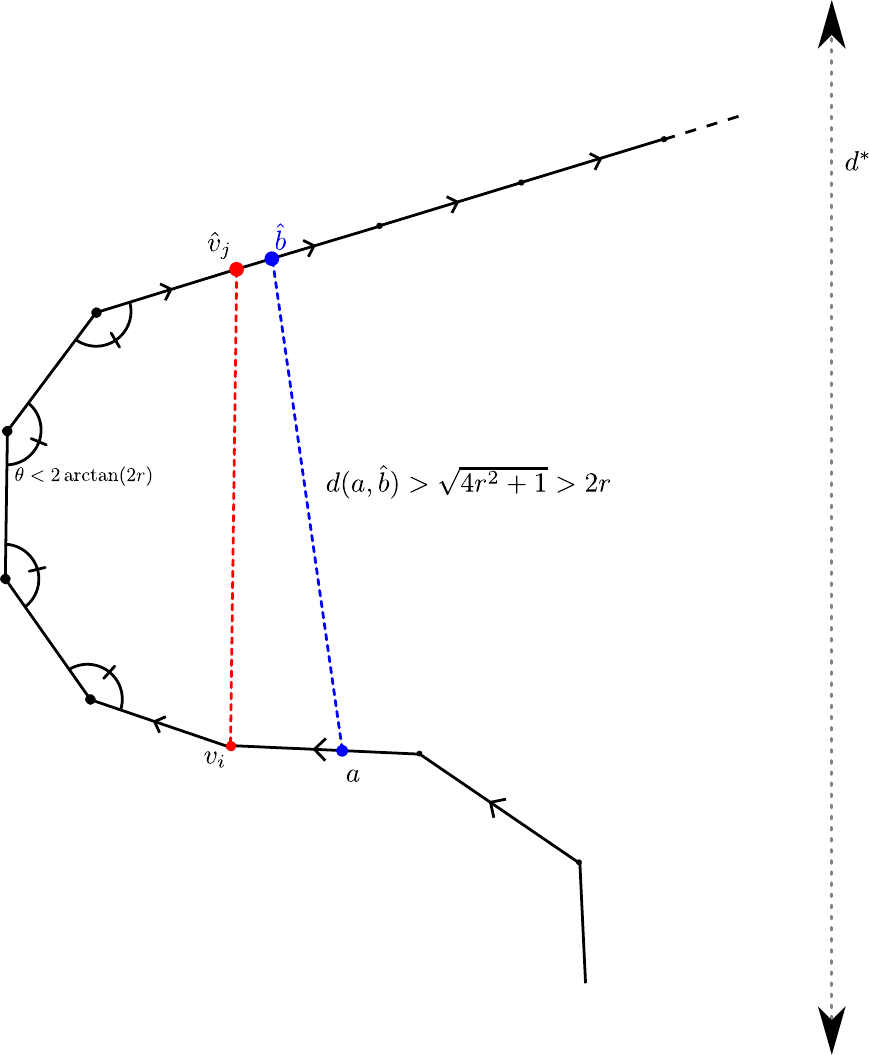}
  \end{center}
 \caption{The closest $a$ and $\hat{b}$ can get is the diameter of a regular, planar $n$-gon with internal angles $\theta$ greater than or equal to $2\arctan(2r)$, which is  }
    \label{sproof}
\end{figure}

Because the new angle $\phi_i  = \pi \geq 2\arcsin(2r)$, there is certainly no violation of the short range interaction. Now suppose that we have some doubly critical pair, $a$ and $\hat{b}$. In order to be doubly critical, and for the configuration is strictly increasing relative to the axis $d^*$, the closest $a$ and $\hat{b}$ can get is the diameter of a regular, planar $n$-gon where the internal angle $\theta$ is greater than or equal to $2\arctan(2r)$, as in Figure \ref{sproof}. Therefore the closest $a$ and $\hat{b}$ can be is $\sqrt{4r^2 +1}< 2r.$ We conclude that this reflection does not decrease the tube radius, and that these reflections each have a neighborhood of valid reflection choices.  

Therefore, we may straighten our segment until it is the straight configuration. This concludes the proof that the reflection method is transitive of the space of thick walks, $\W_{(n,r)}$, as we can move from any configuration to the straight configuration in a finite number of moves.

\end{proof}

\begin{lemma}
\(F\) being forward accessible and \(X\) having a probability density function implies that \(F\) is a \(T\)-chain.
\end{lemma}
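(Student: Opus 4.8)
The plan is to follow the standard control-theoretic route linking the deterministic reachability of the control model to the probabilistic regularity of the chain. The mechanism is that forward accessibility locates a regular (submersion) point of an iterated transition map, after which smoothness of \(F\) together with the noise density lets us push the density forward to manufacture a continuous, strictly positive minorant of some \(P_m(x,\cdot)\), which is exactly the continuous piece required. First I would record that for any fixed choice of the discrete coordinates (the vertex labels in the \(\Z_{n-1}\) factors) the pair-of-reflections map \(F\) depends smoothly on the continuous coordinates (the planes in \(\R P^2\)) and on the input walk; hence the \(m\)-fold composition \(F^m\), regarded as a map from the continuous part of the noise into \(\W_{(n,r)}\) with the discrete data and the starting walk \(x\) held fixed, is smooth. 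This is the content of a ``smooth section,'' and the transitivity theorem just proved guarantees such sections can be taken in the interior of \(\W_{(n,r)}\).

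Fixing \(x\), forward accessibility says the reachable set \(\bigcup_m A^m_+(x)\) has non-empty interior, where \(A^m_+(x)\) is the image of the \(m\)-step smooth section. Because this is a countable union of smooth images, a Baire-category argument forces some single \(A^m_+(x)\) to have non-empty interior, and Sard's theorem then supplies a regular value \(y_x\) in that interior: a point whose preimage contains a noise value at which the Jacobian of \(F^m(x,\cdot)\) with respect to the continuous noise has rank \(\dim \W_{(n,r)}\), so that \(F^m(x,\cdot)\) is a submersion there. Here both hypotheses enter, forward accessibility furnishing the interior and the existence of a (positive) density on \(\R P^2\) making the pushforward meaningful.

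At such a submersion point the rank theorem makes \(F^m(x,\cdot)\) locally a coordinate projection, so pushing the smooth, strictly positive noise density through it yields an absolutely continuous part of \(P_m(x,\cdot)\) whose density is continuous and strictly positive on a neighborhood of \(y_x\). I would upgrade this to hold locally uniformly in the starting state: the submersion condition is open and \(F^m\) is jointly smooth, so for \(x'\) in a neighborhood \(U_x\) the same \(m\) and discrete data still give a submersion whose image contains a common open set, with the resulting density minorant lower semi-continuous in \(x'\) and nonzero. Finally I would globalize by compactness of \(\W_{(n,r)}\): cover it by finitely many such neighborhoods \(U_{x_1},\dots,U_{x_N}\) with step counts \(m_1,\dots,m_N\), choose a sampling distribution \(a\) on \(\N\) charging each \(m_k\), and assemble from the local minorants a single continuous piece \(T \le \kappa_a\) that is lower semi-continuous with \(T(x,\W_{(n,r)}) \neq 0\) everywhere, exhibiting \(F\) as a \(T\)-chain.

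The main obstacle is the middle step: converting the purely topological hypothesis ``the reachable set has non-empty interior'' into the analytic conclusion ``a continuous positive density component exists.'' This is precisely where smoothness of \(F\) in the continuous noise is indispensable, via the rank theorem and Sard's theorem, and the delicate part is making the construction locally uniform in \(x\), since a priori both the number of steps \(m\) and the submersion point depend on the starting state; compactness of \(\W_{(n,r)}\) is what allows a single finite sampling distribution \(a\) to resolve this dependence.
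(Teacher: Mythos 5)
Your proposal is correct and is essentially the paper's own approach: the paper disposes of this lemma by citing Propositions 7.1.5 and 6.2.4 of Meyn and Tweedie, and your argument (a submersion point of the iterated map, the rank theorem, pushforward of the noise density to obtain a lower semi-continuous minorant of $P_m(x,\cdot)$, then globalization over the compact state space with a sampling distribution $a$) is precisely a reconstruction of the proof of those propositions. One small repair: the Baire-category step is unnecessary and, as stated, invalid, since the images $A^m_+(x)$ need not be closed or nowhere dense; instead apply Sard's theorem directly --- if no point of any $A^m_+(x)$ were a regular value with non-empty preimage, each $A^m_+(x)$ would consist of critical values and hence be Lebesgue-null, so their union could not have non-empty interior, contradicting forward accessibility.
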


This is a modification of the argument from {\it Markov Chains and Stochastic Stability} by Meyn and Tweedie \cite{meyn}. We mainly use Propositions 7.1.5 and 6.2.4 from their book \cite{meyn}.
 
\begin{lemma}
A \(T\)-chain with a reachable state is bounded in probability on average if and only if it is positive Harris recurrent.
\end{lemma}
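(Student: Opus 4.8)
The plan is to obtain this lemma as an instance of the equivalence recorded in Theorem 18.0.2 of Meyn and Tweedie \cite{meyn}, so that the work reduces to verifying the hypotheses in our setting and tracking both implications through their machinery. The essential preliminary is to promote the single reachable state to full $\psi$-irreducibility. For a $T$-chain, a reachable state together with the continuous component $T$ forces $\psi$-irreducibility (Proposition 6.2.1 of \cite{meyn}): the reachable point lies in the support of the sampled kernel from every starting configuration, and the continuous piece $T$ spreads positive mass into a fixed nontrivial measure, which then serves as the maximal irreducibility measure $\psi$. This is where the $T$-chain hypothesis does its first essential work, converting ``one reachable point'' into irreducibility with respect to a genuine measure on $\W_{(n,r)}$.

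For the forward direction (positive Harris recurrent implies bounded in probability on average), I would invoke the ergodic theorem for positive Harris chains: such a chain carries a unique invariant probability measure $\pi$, and the Ces\`aro averages $\overline{P_n(x,\cdot)}$ converge to $\pi$ in total variation. A convergent sequence of probability measures whose limit is again a probability measure is automatically tight, so the averaged distributions are tight and the chain is bounded in probability on average. In our situation this direction is in any case immediate, since $\W_{(n,r)}$ is itself compact and may be taken as the required compact set $C$ for every $\epsilon$.

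The reverse direction (bounded in probability on average implies positive Harris recurrent) is the substantive half, and I would carry it out in two stages. First, tightness of the averaged kernels licenses a Krylov--Bogolyubov argument: extract a weakly convergent subsequence of the Ces\`aro averages and verify that its limit is fixed by the transition kernel, producing an invariant probability measure and hence positivity; combined with the $\psi$-irreducibility established above, this yields positive recurrence. Second, I would upgrade positive recurrence to \emph{positive Harris} recurrence, which requires excluding a $\psi$-null exceptional set on which the Harris property could fail. Here the $T$-chain structure is decisive: for a $\psi$-irreducible $T$-chain every compact set is petite (Theorem 6.2.5 of \cite{meyn}), so the invariant measure charges a petite set, and the standard petite-set criterion then promotes recurrence to Harris recurrence with no surviving null portion.

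I expect this last passage, from ordinary positive recurrence to Harris recurrence, to be the main obstacle, since it is precisely the place where one must rule out a pathological $\psi$-null, non-Harris remnant. The compactness of $\W_{(n,r)}$ and the petiteness of compact sets furnished by the $T$-chain property are exactly the ingredients that close this gap, and I would foreground them as the crux of the argument rather than the more routine invariant-measure extraction.
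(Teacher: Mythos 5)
The paper offers no proof of this lemma at all: it is quoted as Proposition 18.3.2 of Meyn and Tweedie \cite{meyn}, so your top-level reduction to the Chapter 18 equivalence is the same move the paper makes, and two of your supporting steps are sound. A reachable state plus the continuous component does yield $\psi$-irreducibility (this is exactly Meyn and Tweedie's Proposition 6.2.1, with $\psi = T(x^{*},\cdot)$ as the irreducibility measure), and the forward direction (positive Harris implies tight Ces\`aro averages) is correct --- indeed trivial in this paper's setting, since $\W_{(n,r)}$ is compact.

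The genuine gap is the Krylov--Bogolyubov step in the reverse direction. To verify that a weak limit $\pi$ of a subsequence of the averaged kernels $\overline{P_n(x,\cdot)}$ is invariant, one must pass the kernel through the weak limit, i.e.\ one needs $Pf$ continuous whenever $f$ is bounded and continuous --- the weak Feller property. A $T$-chain need not be weak Feller: the continuous component $T$ is only a lower semicontinuous minorant of a sampled kernel, not the kernel itself, and the chain in this paper (which returns the input walk whenever a reflection violates thickness, an accept/reject rule that can create discontinuities) is not obviously Feller either. So for the lemma as stated, about general $T$-chains, the invariance of the limit cannot be verified this way. Meyn and Tweedie's own proof avoids weak limits entirely: once $\psi$-irreducibility is in hand, every compact set is petite for a $\psi$-irreducible $T$-chain, and for transient or null recurrent chains the averages $\overline{P_n(x,C)}$ tend to $0$ on every petite set $C$; tightness of the averages therefore rules out both alternatives of the dichotomy and forces positive recurrence, after which the $T$-chain structure upgrades recurrence to Harris recurrence --- the step you correctly flagged as the crux, and where your appeal to petiteness of compact sets is the right ingredient. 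Your skeleton survives, but the invariant measure must be produced by this dichotomy argument, not by Krylov--Bogolyubov.
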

This is proposition 18.3.2 from {\it Markov Chains and Stochastic Stability} by Meyn and Tweedie \cite{meyn}. 

\begin{lemma}
\(F\) is aperiodic.
\end{lemma}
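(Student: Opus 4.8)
\section*{Proof proposal for ``$F$ is aperiodic''}

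The plan is to argue directly from the paper's definition of periodicity by exhibiting, for \emph{every} walk in the state space, a strictly positive one-step probability of staying in place; such a holding (self-loop) probability is incompatible with any nontrivial cyclic decomposition. Concretely, suppose toward a contradiction that $F$ has period $d>1$, witnessed by disjoint non-empty closed sets $C_0,\dots,C_{d-1}$ with $P_1(x,C_{i+1})=1$ for every $x\in C_i$ (indices mod $d$). Since the $C_i$ are pairwise disjoint, this forces $P_1(x,C_i)=0$ for each $x\in C_i$. I will contradict this by showing $P_1(x,\{x\})>0$ for all $x\in\W_{(n,r)}$, whence $P_1(x,C_i)\ge P_1(x,\{x\})>0$ because $\{x\}\subseteq C_i$.

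The holding probability comes from the rejection rule of Section~4: if the proposed pair of reflections carries the walk out of $\W_{(n,r)}$, the chain returns its input, producing an atom of $P_1(x,\cdot)$ at $x$. Fix any $x=W_n$ and select both break vertices of the noise to be $v_{n-1}$, an event of probability $1/(n-1)^2>0$ under the uniform law on $(\Z_{n-1}\times\R P^2)^2$. With both breaks at $v_{n-1}$, each reflection fixes $v_{n-1}$ and moves only the terminal vertex, so the composite is a rotation about an axis through $v_{n-1}$; as the two planes (both incident to $v_{n-1}$) range over $\R P^2\times\R P^2$, this composite realizes every element of $SO(3)$ and hence sends the image of $v_n$ to an arbitrary point of the unit sphere about $v_{n-1}$. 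The set of placements for which the terminal angle satisfies $\phi_{n-1}<2\arctan(2r)$ is an open, non-empty spherical cap (fold the last segment sharply back), so those configurations violate the short-range constraint of the thickness definition and leave $\W_{(n,r)}$. For every noise value in the corresponding parameter set the proposal is therefore rejected, giving $P_1(x,\{x\})$ at least the (positive) measure of that set. This produces a self-loop at every state and completes the contradiction, so the only period is $1$ and $F$ is aperiodic.

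The main obstacle is the measure-theoretic bookkeeping of the last step: one must confirm that the pushforward of the uniform measure on the plane-pair parameter $\R P^2\times\R P^2$ through the smooth, surjective map to the composite rotation assigns \emph{positive} mass to the open ``sharp-angle'' region, i.e.\ that the rejection event is genuinely non-null rather than merely non-empty. Here the continuity of the reflection action and the smoothness already invoked to establish forward accessibility and the $T$-chain property do the work. I would also note that aperiodicity in this form does not require the full strength of the positive Harris recurrence proved in the preceding lemmas; the self-loop argument is purely a statement about the one-step kernel, and the recurrence results are instead what combine with aperiodicity to yield convergence of $P_m(x,\cdot)$ to the invariant (uniform) distribution.
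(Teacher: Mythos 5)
Your argument is sound for every $r>0$, but it takes a genuinely different route from the paper's. You manufacture a holding probability out of \emph{rejected} proposals: conditioning on both break vertices being $v_{n-1}$ (probability $1/(n-1)^2$), the two reflections compose to an arbitrary rotation of $v_n$ about $v_{n-1}$, and the set of plane pairs driving $v_n$ into the sharp-angle cap $\{\phi_{n-1}<2\arctan(2r)\}$ is the preimage of a non-empty open set under a continuous surjective map, hence open, non-empty, and of positive uniform measure --- so rejection leaves an atom of $P_1(x,\cdot)$ exactly at $x$. (Your ``main obstacle'' is thus easily dispatched; no delicate pushforward analysis is needed.) The paper instead manufactures its self-loop out of an \emph{accepted} proposal: a double reflection at the second-to-last vertex through planes containing the last edge fixes the walk $w$ exactly, and since that fixing family of noise values is only measure zero, the paper then invokes closedness of $C_{i+1}$ and continuity of the reflection action to conclude that a positive-measure neighborhood of noise values lands in an open neighborhood of $w$ disjoint from $C_{i+1}$. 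Your version buys a cleaner measure-theoretic statement: because the atom sits exactly at $x$, you need the cyclic classes to be only disjoint and non-empty, not closed, and you never use continuity of $F$; the argument applies verbatim to the standard measurable definition of periodicity. The paper's version buys uniformity in $r$, as we now see.

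The one genuine gap is the case $r=0$, which the lemma as stated includes. There the short-range constraint reads $\phi(W_n)\geq 2\arctan(0)=0$ and is vacuous, so your spherical cap is empty; the rejection event is a null set (the paper's own Table 1 records a $100\%$ acceptance rate at thickness $0$), hence $P_1(x,\{x\})=0$ and the contradiction evaporates. The paper's fixed-point argument is indifferent to this, since the fixing double reflection is accepted for every $r\geq 0$ and the continuity step never mentions rejection. To repair your proof you must either restrict the lemma to $r>0$ (the case of actual interest in the paper, but a restriction the statement does not make), or handle $r=0$ separately by the paper's device: noise values near the identity-inducing pair produce accepted walks arbitrarily close to $x$, which, together with closedness of the $C_i$, again contradicts $P_1(x,C_{i+1})=1$.
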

Seeking a contradiction, suppose there exists \(C_i\) with \(i\in \Z_n\) which are disjoint non-empty closed sets and the probability of going from \(C_i\) to \(C_{i+1}\) is one. Let \(w \in C_i\). If we apply a pair of reflections at the second to last vertex through planes containing the last edge, then \(w\) is left fixed by this double reflection. Thus, there exists a double reflection move \(r\) with \(r(w) = w \notin C_{i+1}\). Since \(C_{i+1}\) is closed, there is an open neighborhood of \(w\) which is not in \(C_{i+1}\) so there is a positive probability of landing in this open neighborhood of \(w\). This contradicts the requirements of periodicity. Thus, \(F\) is aperiodic.

\begin{theorem}
The Markov chain defined above is ergodic.
\end{theorem}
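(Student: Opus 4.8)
The plan is to assemble the four preceding results into the standard convergence theorem for aperiodic positive Harris chains from Meyn and Tweedie \cite{meyn}. The logical spine is: forward accessibility implies \(T\)-chain; a \(T\)-chain with a reachable state on a compact (hence tight) state space is positive Harris recurrent; and a positive Harris recurrent, aperiodic chain is ergodic.

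First I would use the first Theorem above to supply the geometric input. It produces, from any starting walk \(W_n\), a finite sequence of single and double reflections carrying \(W_n\) to the straight walk, with each reflection \(\rho_i\) lying on the interior of a neighborhood \(R_i\) of valid moves. Because these neighborhoods have non-empty interior and the reflection maps depend smoothly on the noise parameter, the set of walks reachable from \(W_n\) in finitely many steps contains an open set; this is exactly forward accessibility of \(F\). Two consequences follow. By the first Lemma above, since the noise \(X = (\Z_{n-1}\times \R P^2)^2\) carries the uniform (hence absolutely continuous) distribution, forward accessibility makes \(F\) a \(T\)-chain. Moreover, since the straight walk is reachable from every state, it serves as the reachable state required below.

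Next I would verify the recurrence hypothesis. The state space \(\W_{(n,r)}\) is compact, so every sequence of probability measures on it is tight (take \(C = \W_{(n,r)}\) itself), and in particular the Ces\`aro averages \(\overline{P_n(x,\cdot)}\) are tight; thus \(F\) is bounded in probability on average. Applying Proposition 18.3.2 of \cite{meyn} (the second Lemma above) to the \(T\)-chain \(F\) equipped with its reachable state, bounded in probability on average upgrades to positive Harris recurrence, and in particular \(F\) admits a unique invariant probability measure \(\mu\).

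Finally, combining positive Harris recurrence with aperiodicity (the last Lemma above), I would invoke the Meyn--Tweedie aperiodic ergodic theorem \cite{meyn}, which yields \(\|P^m(x,\cdot) - \mu\| \to 0\) in total variation for every starting walk \(x\); this is the desired ergodicity. To identify \(\mu\), note that each reflection is an isometry of \(\W_n = \prod_n S^2\) preserving the product of uniform measures, and the accept/reject rule is symmetric in the two configurations, so \(F\) is reversible with respect to the uniform measure on \(\W_{(n,r)}\); by uniqueness of the invariant measure, \(\mu\) is exactly this uniform distribution. The main obstacle is really the non-empty-interior claim underlying forward accessibility, that is, establishing that the reachable set is genuinely full-dimensional rather than merely non-empty; but this is precisely what the repeated ``neighborhood of valid reflections'' remarks in the first Theorem were arranged to guarantee, so the remaining work is the bookkeeping of confirming each Meyn--Tweedie hypothesis in turn rather than any new mathematics.
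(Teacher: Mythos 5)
Your proposal follows essentially the same route as the paper's proof: forward accessibility from the straightening theorem, the $T$-chain property via the uniform density on the noise space, boundedness in probability on average from compactness of $\W_{(n,r)}$, positive Harris recurrence via the reachable state (the straight walk) and Proposition 18.3.2 of Meyn--Tweedie, and finally aperiodicity feeding into the aperiodic ergodic theorem. Your closing identification of the invariant measure as the uniform distribution via reversibility is a small addition beyond what the paper's proof states explicitly (the paper asserts uniformity only in the introduction), but the core argument is the same.
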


\begin{proof}
First, the first lemma shows that for any walk there is a sequence of finitely many double reflections which bring this walk to the straight walk, and each double reflection in this sequence is on the interior of a smooth section of \(F\). Then, by concatenating this sequence with the reverse of another such sequence, we can connect any two walks on the interior of \(\W_{(n,r)}\) with a sequence of reflections on the interior of a smooth section of \(F\). This means that \(F\) creates a forward accessible Markov chain. Next, since our noise parameter \(X\) has a probability density function, namely a constant, we can combine this with forward accessibility to get that the Markov chain is a T-chain. Since our state space is compact, we get that any sequence of probability distributions is tight, so in particular our Markov chain \(F\) is bounded in probability on average. The fact that every walk can be brought to any neighborhood of the straight walk with positive probability means that \(F\) has a reachable state and so \(F\) being a \(T\)-chain with a reachable state which is bounded in probability on average makes \(F\) positive Harris recurrent. Finally, since there is a fixed point at the interior of a smooth section, \(F\) cannot be periodic and so the aperiodic ergodicity theorem tells us that \(F\) is ergodic.
\end{proof}

\section{Knotting In Open Chains}

\subsection{Definition of Knotting in Open Chains}

Methods for determining the nature of knottedness in open chains relies on closing an open configuration and determining the knot type of that closed arc by calculating the knot polynomials for the closed configuration \cite{millett2005}. These methods differ in the method of closing the open configuration. One possible method is direct closure: the two ends of the open chain are connected by a straight arc. However, there are many configurations that we want to be considered as knots that direct closure fails to detect, like the one in Figure \ref{direct}.

\begin{figure}[!htp]
  \begin{center}
    \includegraphics[scale=.4]{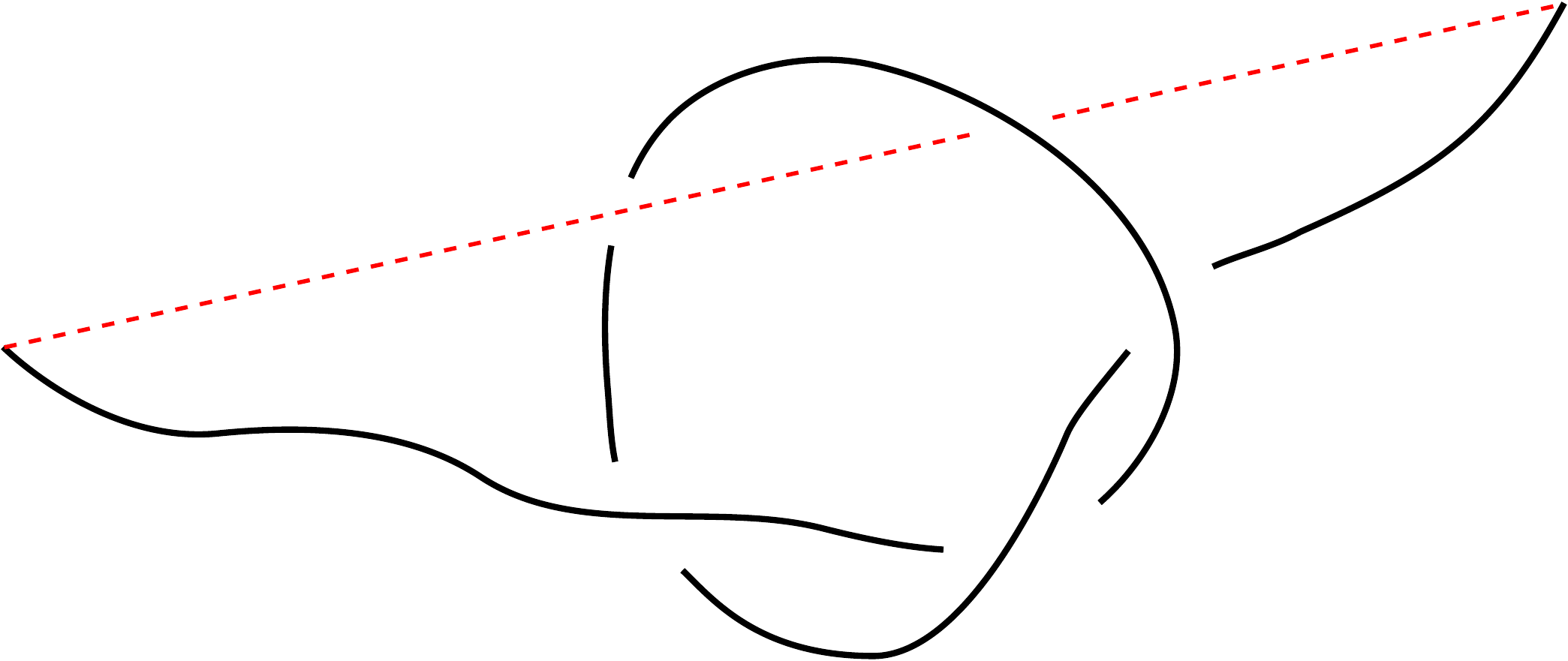}
  \end{center}
 \caption{A possible configuration where the direct closure method would give an unknot.}
    \label{direct}
\end{figure}

Taylor described a method where the ends of the open chain were fixed, and the remaining configuration is deformed and smoothed as much as possible \cite{taylor}. Following these deformations, the ends are closed to a distant point (often referred to as closure at infinity) and the knot type is determined via knot polynomials \cite{taylor}. This closure scheme proves to be problematic: Millett et al. discovered configurations which when smoothed with Taylor's algorithm from head to tail produced unknots, and when smoothed from tail to head produced trefoils \cite{Ken05}. We would prefer a method where the determination of the knot type is independent of the ordering of the vertices.

These ambiguities reveal that in open chains, knottiness is not deterministic. This suggests that a probabilistic measure of how knotted each chain should be used instead. Mansfield proposed a method where the ends of the protein structure were extended, by straight segments, to two random points on a large sphere enclosing the protein \cite{Mansfield94, Mansfield97}. These ends were connected by the arc of a great circle connecting them and the knot type of this closed configuration determined using knot invariants \cite{Mansfield94, Mansfield97}. Mansfield performed this operation 100 times for each protein to determine the dominant knot type in the resulting data \cite{Mansfield94, Mansfield97}.

Millett et al. proposed a simplified variant of Mansfield's method: the open arc is situated inside of a large sphere, and the ends of the arc are connected to a point on the sphere by two straight segments \cite{Ken05}. This closure produces some knot type. This procedure is repeated for points over the surface of the sphere, so that for a collection of points on the surface of the sphere there is a specific knot   and a spherical distribution of knot types is created associated with the open knot, sometimes referred to as the knot spectrum of the configuration, and from this the dominant knot type is determined \cite{Ken05}. For several examples from the protein database, Millett's method is in agreement with Mansfield's for all samples and differed from Taylor's for at least one \cite{millett2005}.
   
\subsection{Method of Calculation}

For our uses, we will want to associate a single knot type with an open configuration, rather than a knot spectrum. This requires determining the dominant knot type. We will use three standards for dominance, from \cite{millett2005}:
\begin{itemize}
\item {\bf Strong dominance} requires that the knot type occurs in $90\%$ or more closure. 
\item {\bf Dominance} requires that the knot type occurs more than twice as often as the second likeliest knot type. For example, if trefoils occurred for $70\%$ of the closures and the unknot occurred for $30\%$ of the closures, we would say that trefoil was the dominant knot type. 
\item {\bf Weak dominance} requires that the knot type occurs for a majority of closures.  
\end{itemize}

For length $300$ walks, the average dominance percentage was $94\%$ with a standard deviation of less than $1\%$, although we use weak dominance for our criterion when determining knot type. For the shorter chains, the dominance percentages were higher, as in Millett's work \cite{millett2005}. Similarly, thicker chains had higher percentages for the dominant knot type.

\section{Numerical Method and Results}
\subsection{Methods}
We generated a data set consisting of 5000 samples for each of the following pairs of lengths and thicknesses: 
\begin{itemize}
\item lengths: 100 to 1000 in steps of 100. 
\item thicknesses: 0, 0.1, 0.2, 0.3, 0.4, 0.5, 0.6, 0.7, 0.8, 0.9 and 1 (Note, these correspond to minimal bending angles of about $0^\circ$, $23^\circ$, $44^\circ$, $62^\circ$, $77^\circ$, $90^\circ$, $100^\circ$, $109^\circ$, $116^\circ$, $122^\circ$, and $127^\circ$.)
\end{itemize}

From this data we examined squared radius of gyration, squared end to end distance, and for the length 300 samples, knotting in the open chains. 

\subsection{Runtime and Probability of Acceptance}

For the above simulations, we recorded the probability of accepting a move and the average run time for each length and thickness. Below in Table \ref{acceptance}, we have the probability for accepting a single or double reflection move, for a variety of lengths and thicknesses. When the thickness is 0, every move is accepted and the acceptance rate is 100\%. Increasing length or thickness decreases the probability of accepting a move, though no combination we used resulted in an acceptance rate less than 25\%.  

\begin{table}[h]
\centering
\begin{tabular}{|c|c|c|c|c|c|c|c|c|c|c|c|}
\hline
\rowcolor[HTML]{C0C0C0} 
                                     & \textbf{0.0} & \textbf{0.1} & \textbf{0.2} & \textbf{0.3} & \textbf{0.4} & \textbf{0.5} & \textbf{0.6} & \textbf{0.7} & \textbf{0.8} & \textbf{0.9} & \textbf{1} \\ \hline
\cellcolor[HTML]{C0C0C0}\textbf{100} & 100.00       & 78.56        & 68.47        & 60.71        & 54.12        & 48.51        & 43.58        & 39.39        & 35.79        & 32.65        & 29.95      \\ \hline
\cellcolor[HTML]{C0C0C0}\textbf{200} & 100.00       & 73.57        & 63.89        & 56.73        & 50.71        & 45.55        & 41.15        & 37.35        & 34.07        & 31.25        & 28.78      \\ \hline
\cellcolor[HTML]{C0C0C0}\textbf{300} & 100.00       & 70.68        & 61.25        & 54.39        & 48.74        & 43.84        & 39.65        & 36.08        & 32.96        & 30.31        & 27.99      \\ \hline
\cellcolor[HTML]{C0C0C0}\textbf{400} & 100.00       & 68.55        & 59.37        & 52.79        & 47.32        & 42.68        & 38.53        & 35.19        & 32.20        & 29.64        & 27.39      \\ \hline
\cellcolor[HTML]{C0C0C0}\textbf{500} & 100.00       & 66.98        & 58.02        & 51.54        & 46.29        & 41.70        & 37.84        & 34.46        & 31.60        & 29.10        & 26.94      \\ \hline
\cellcolor[HTML]{C0C0C0}\textbf{600} & 100.00       & 65.74        & 56.88        & 50.69        & 45.35        & 41.02        & 37.18        & 33.89        & 31.09        & 28.63        & 26.57      \\ \hline
\cellcolor[HTML]{C0C0C0}\textbf{700} & 100.00       & 63.65        & 56.11        & 49.79        & 44.94        & 41.23        & 37.13        & 33.14        & 30.59        & 28.62        & 25.99      \\ \hline
\cellcolor[HTML]{C0C0C0}\textbf{800} & 100.00       & 63.11        & 55.33        & 49.79        & 44.26        & 39.28        & 36.50        & 32.79        & 30.29        & 27.88        & 25.58      \\ \hline
\cellcolor[HTML]{C0C0C0}\textbf{900} & 100.00       & 63.16        & 54.17        & 48.56        & 43.26        & 39.49        & 35.62        & 32.57        & 30.12        & 27.57        & 25.55      \\ \hline
\cellcolor[HTML]{C0C0C0}\textbf{1000} & 100.00       & 62.73        & 53.28        & 48.06        & 42.75        & 38.94        & 34.94        & 32.42        & 29.70        & 27.79        & 25.59      \\ \hline
\end{tabular}   \caption{The acceptance probability of reflection moves for a variety of thicknesses and lengths. Note that the longer and thicker a chain, the less likely a move will be accepted, though none of these acceptance rates is less that 25\%. }
   \label{acceptance}
\end{table}

For the pivot method, the probability of accepting a move is about $N^{-0.19}$ for a walk of length $N$ \cite{madrassokal}. If the probability of accepting a reflection move also scales like $N^\alpha$, one can solve for $\alpha$ as a function of thickness, $r$. The results are shown in Figure \ref{accept}.

The scaling of our acceptance rates is considerably closer to $0$ than the pivot method for all $r$. At $\alpha = 0$, every reflection move is accepted, and run time would be minimal. Thus, the closer to zero, the higher the probability of acceptance. Our acceptance probability is particularly good for very thin and very thick chains.

We also calculated average run time. While this will vary depending on machine specifics and implementation, the scaling of run time is still meaningful. We assumed that run time would also be proportional to $N^{\beta}$ and solved for $\beta$ as a function of $r$. 

Similar to our scaling for the probability of accepting a reflection, the run time scales like $N^{2.75}$ for chains with no thickness, and less than that for chains with thickness greater than 0.4. When $r\in [0.1,0.3]$, generation is more computationally expensive due to the combination of flexibility and thickness resulting in more interactions between segments.
\begin{center}
\begin{figure}[!htp]
\centering
    \includegraphics{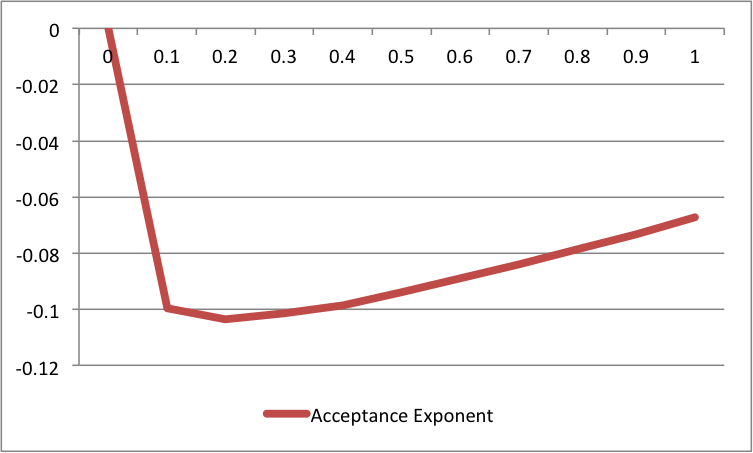}
    \caption{We analyzed how the acceptance probability of a reflection move scaled as a function of $N$. Assuming that the probability of acceptance is proportional to $N^{\alpha}$, we solved for $\alpha$ and plotted it as a function of thickness.}
  \label{accept}
\end{figure}
\end{center}

\begin{center}
\begin{figure}[!htp]
\centering
    \includegraphics{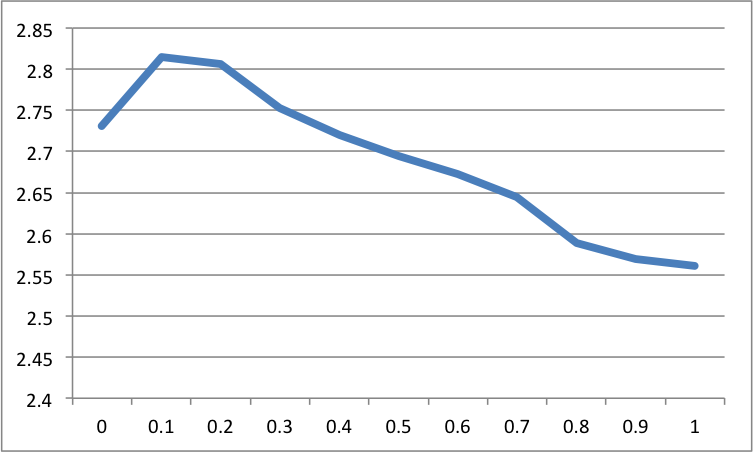}
    \caption{We analyzed how the generation run time scaled as a function of $N$. Assuming that the run time is proportional to $N^{\beta}$, we solved for $\beta$ and plotted it as a function of thickness.}
  \label{runtime}
\end{figure}
\end{center}

\section{Results}
\subsection{Knot Probability and Thickness}

 Previous studies have shown that for closed rings of DNA, where effective thickness has been increased through electrostatic repulsion, thinner (less self repelling chains of DNA) had a higher incidence of trefoil and knot formation than their thicker counterparts \cite{rybenkov}. Likewise, previous work on and off the simple cubic lattice have shown that knot formation decreases precipitously as thickness increases \cite{frank}.

\begin{figure}[!htp]
  \begin{center}
     \includegraphics{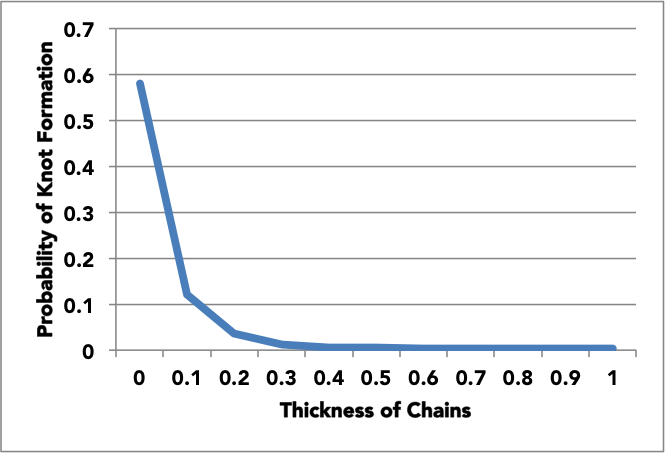}
    \caption{Knot probability as a function of thickness for self avoiding walks of length 300. For each $0.1$ added to thickness, probability of knot formation his cut by 50\% or more, up to $r=0.5$ where probability of knot formation stabilizes at less than $0.5\%$. }
\label{knot_prob}
  \end{center}
\end{figure}

In Figure \ref{knot_prob} we see that for $r \in [0,0.4]$, an increase of $0.1$ in thickness results in the probability of knotting being reduced by 50\% or more, for walks of length 300. In this range, very modest changes to thickness result in a dramatic decrease in the probability of knot formation. For $r > 0.4$ we see different behavior: the probability of knotting is largely unchanged by increases to thickness. While we suspect that this is an artifact of the short length scale, it is evidence that there are ranges where knotting depends more critically on thickness. This confirms that the probability of knot formation is strongly effected by the effective thickness of the chain, and is consistent with Deguchi and Tsurusaki's work analyzing knot formation in self avoiding walks using the rod and bead model \cite{universal_knotting}. We expect to see similar results for longer length scales, taking into account the increased probability of knot formation as length increases \cite{deguchi1994}.

\begin{figure}[!htp]
  \begin{center}
     \includegraphics[scale=.75]{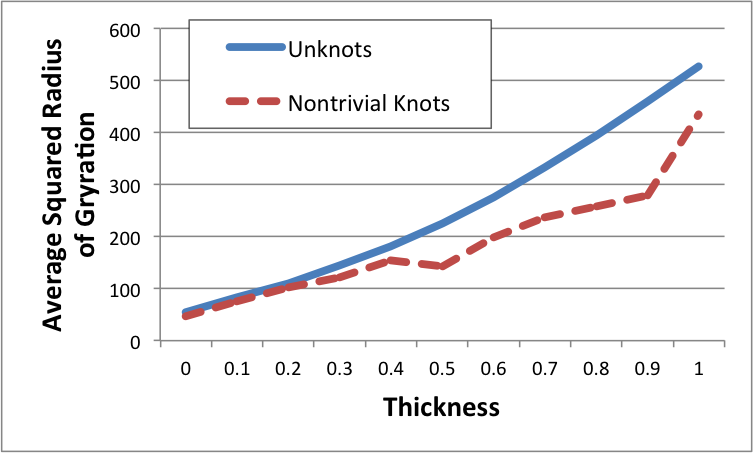}
    \caption{Knot size (average squared radius of gyration) as a function of thickness for self avoiding walks of length 300. Unknots (blue and solid) are larger on average than nontrivial knots (dashed red).}
\label{knot_size}
  \end{center}
\end{figure}

In Figure \ref{knot_size} we see that, as expected, the thicker chains have much larger squared radius of gyration. Also, as expected, the unknotted configurations are larger than the knotted configurations for all thicknesses, although the difference between the knotted population and the unknotted population is more dramatic for the thicker chains. The variance in average squared radius of gyration in the thick, knotted conformations is due to the small number of knotted samples for $r\geq0.4$.  

\subsection{Growth Exponents and Regime Change}

As we would expect, just as average squared radius of gyration increases with the length of a random walk, with or without thickness, average squared radius of gyration increases dramatically with the introduction and continued increase of thickness, as in Figure \ref{RG}. 

\begin{figure}[htbp]
   \centering
   \includegraphics[scale=.65]{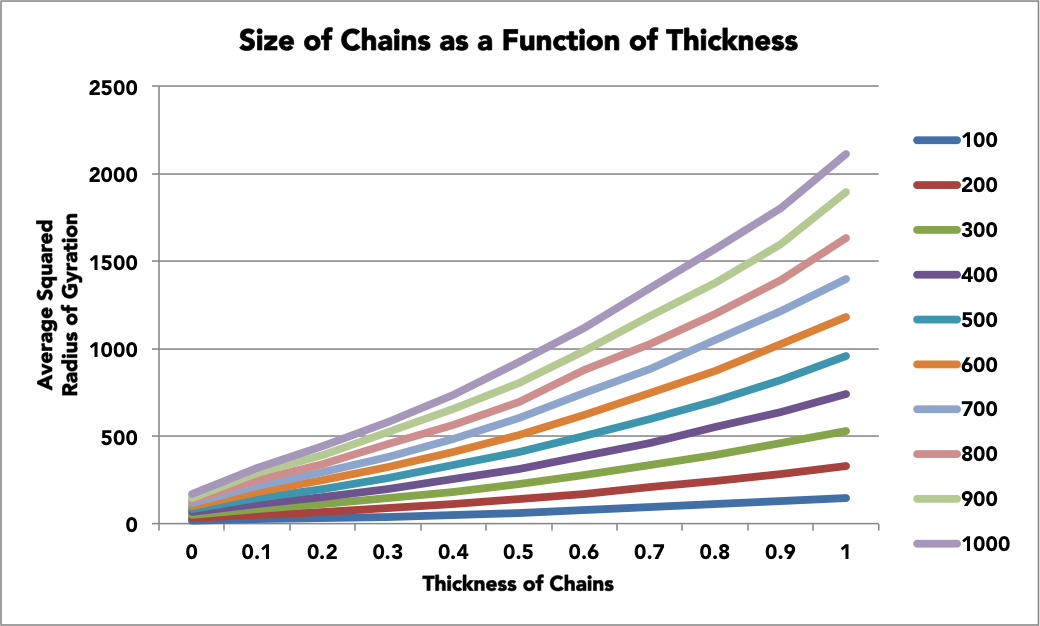} 
   \caption{A comparison of average squared radius of gyration as a function of thickness for self avoiding walks of varying lengths: 100, 200, 300, 400, 500, 600, 700, 800, 900 and 1000.} 
   \label{RG}
\end{figure}

As discussed in the introduction, polymer models are often classified in terms of the characteristics of the solvents in which they are submersed: 
\begin{itemize}
\item {\bf ``Theta solvent'' or ``Theta temperature'' models:} These demonstrate growth of squared end to end distance and squared radius of gyration proportional to $N$. Ideal chains fall into this category. 
\item {\bf ``Good solvent'' models:} These models demonstrate growth of squared end to end distance and squared radius of gyration proportional to $N^{\nu}$ with $\nu > 1$. $\nu$ has been estimated to be $1.2$, and in the simple cubic lattice model (SAW) the exponent has been shown to be $1.18$, and Edwards' perturbation calculation of $1.176 \pm .002$ \cite{degennes, micheletti2011, madras_ergo, DoiEdwards}.  
\item {\bf ``Bad solvent'' models:} These models demonstrate growth of squared end to end distance and squared radius of gyration proportional to $N^{\nu}$ with $\nu < 1$. 
\end{itemize}

We found the average squared radius of gyration of each population, and estimated the exponent $\nu$ for each thickness we sampled. 

\begin{figure}[htbp]
   \centering
   \includegraphics[scale=.65]{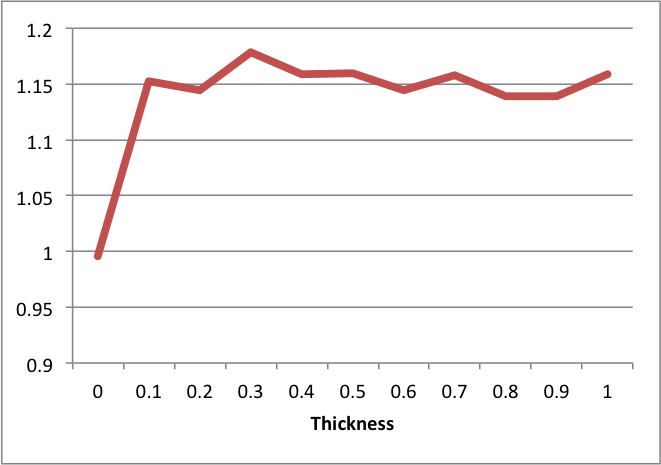} 
   \caption{We analyzed how the squared radius of gyration scaled as a function of $N$. Assuming that $RG_n^2 \propto N^{\nu}$, we solved for $\nu$ and plotted it as a function of thickness for self avoiding walks (red, solid). The exponent, as expected for the idea case, was close to $1$ for thickness $0.0$ and increased to $1.16$ where it was fairly stable ($\pm .01$) for thicknesses $r\in[0.2,1.0]$.} 
   \label{gexp}
\end{figure}

For ideal chains it had been shown that $RG_n^2 \propto N$, and it is easily showed that for $r = \infty$, we have the straight configuration where $RG_n^2 \propto N^2$ \cite{zirbel}. It is therefore natural to characterize some $\nu$ for each thickness $r \in [0,\infty)$ such that $RG_n^2 \propto N^{\nu}$. Excluded volume has been classically characterized, via mean field theory, by the estimate $\nu = 1.2$ \cite{DoiEdwards}. In wet experiments and numerical simulation for good solvent polymers, $\nu$ has been estimated as everything between $1.1$ and $1.2$, with $2*0.588 = 1.18$ in walks on the simple, cubic lattice,  $1.176$ with the Edwards model and with the reflection method on the simple cubic lattice \cite{DoiEdwards, hyper}. We can see that there is an immediate impact of thickness: the scaling exponent increases to the $[1.14,1.17]$ interval almost immediately, as predicted by Vologodskii's simulations of very thin walks and rings \cite{vologodskii}.

For comparison, we will refer to $\nu_r$ as the scaling exponent for the self avoiding walk with thickness $r$, and $\mu_r$ as the scaling exponent for the short range only walk of thickness $r$.

We found the average $RG_n^2$ for each data set, and did linear regression with vertical offsets on the $\log$ of the data. This allowed us to solve for $\nu$ as a function of thickness. 

\begin{figure}[htbp]
   \centering
   \includegraphics[scale=.65]{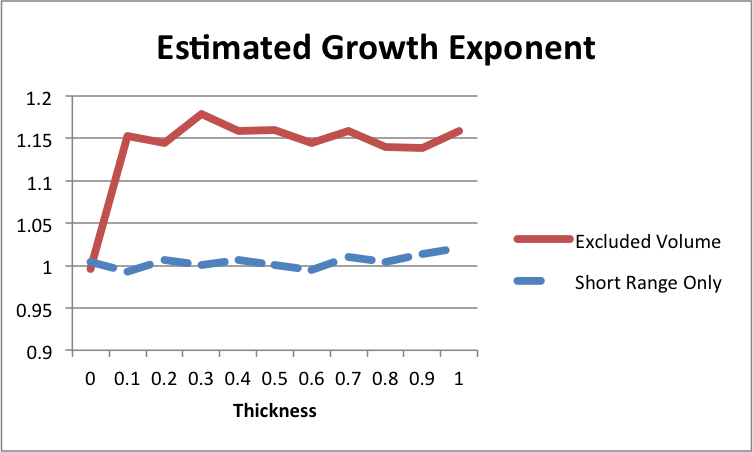} 
   \caption{We analyzed how the squared radius of gyration scaled as a function of $N$. Assuming that $RG_n^2 \propto N^{\nu}$, we solved for $\nu$ and plotted it as a function of thickness for self avoiding walks (red, solid) and short range constraint only walks (blue, dashed). The exponent, as expected for the idea case, was close to $1$ for thickness $0.0$ and increased to $1.16$ where it was fairly stable ($\pm .01$) for thicknesses $r\in[0.2,1.0]$. For short range only walks, the exponent was consistently close to 1, as expected.} 
   \label{gexp}
\end{figure}

A welcome result of our study was the maximum value that $\nu_r$ obtains - about $1.16$. This estimate of the growth exponent is close (though lower) than previous numerical and scientific experiments, including Madras and Sokal's exponent for self avoiding walks on the lattice, $1.18$, Edwards' peturbation calculation of $1.176 \pm .002$ and Caraciolo, Ferraro and Pilisetto's $1.1723 \pm 0.005$ \cite{needle, cotton, DoiEdwards, madras_ergo}.

These results suggest that the effect of excluded volume instantaneously effects size and the scaling of size: even very, very thin chains are dramatically swelled by the excluded volume effect. 

\section{Conclusions}

The numerical results from the reflection method have confirmed a great deal about random walks with excluded volume. Thick walks have a greater radius of gyration, and the difference between knotted and unknotted populations is more dramatic for thick walks. Thick walks are more likely to be unknotted than thin walks of the same length, and for all thicknesses, the longer the walk, the higher the probability of knotting. 

At the same time, we have some new facts as well. For thin walks, those with $r < 0.4$, small increases in thickness result in dramatic decreases in knot probability: increasing $r$ by $0.1$, or one tenth of the edge length, reduces knot probability by about half. But for walks with $r \geq 0.4$, knot probability is not decreased when the radius is increased. What is more remarkable is that this behavior seems to be independent of length, suggesting that this is a true behavioral phase change at $r = 0.4$. 

Importantly, our examination of scaling as a function of thickness has shown that the exponent estimation associated with excluded volume is close to $1.16$, as in walks on the simple cubic lattice. We plan on investigating how these exponents vary for other polymer models, which may include an analysis of this model where long range and short range are allowed to vary independently. We conjecture that for sufficiently long lengths, the scaling exponent will converge to a constant for $r \in (0,\infty)$.   

There is much to do to fully explore the capabilities of this algorithm, including at a very basic level examining longer lengths, very thin chains and very thick chains to confirm that trends described above persist. We will also examine how knot probabilities and knot sizes scale with the length $N$,  as in Deguchi and Tsurusaki's work, and examining knot length as a function of walk thickness \cite{universal_knotting}. Beyond the realm of thick walks, another goal is to extend the similar results for rings to greater lengths, as well as simulations of polymer melts of thick chains and rings \cite{kyle}.

\bibliographystyle{plain}
\bibliography{myrefs.bib}

\end{document}